\documentclass[11pt, normalheadings, oneside, reqno]{amsart}

\pagestyle{plain}

\addtolength{\textheight}{-\baselineskip}
\addtolength{\footskip}{\baselineskip}

\usepackage[english]{babel}

\usepackage[OT1]{fontenc}

\usepackage{mathtools}

\usepackage{amsmath,amssymb,amsfonts,mathrsfs}

\usepackage{bbm}

\usepackage[hidelinks]{hyperref}

\usepackage{enumitem}

\usepackage{bm}

\DeclareMathOperator\Mat{Mat}

\DeclareMathOperator\id{id}

\DeclareMathOperator\Conv{\overline{conv}}

\DeclareMathOperator\dist{dist}





\DeclareMathOperator{\card}{card}

\DeclareMathOperator{\Lip}{Lip}


\newtheorem{theorem}{Theorem}[section]
\newtheorem{question}{Question}[section]
\newtheorem{example}[theorem]{Example}
\newtheorem{definition}[theorem]{Definition}
\newtheorem{corollary}[theorem]{Corollary}
\newtheorem{lemma}[theorem]{Lemma}
\newtheorem{proposition}[theorem]{Proposition}

\theoremstyle{nonumberplain}

\newcommand{\R}{\mathbb{R}}

\newcommand*\norm[1]{\lVert#1\rVert}

\newcommand\abs[1]{\left\lvert#1\right\rvert}

\renewcommand{\epsilon}{\ensuremath\varepsilon}

\renewcommand{\phi}{\ensuremath{\varphi}}

\begin{document}
\title{Lipschitz extensions to finitely many points}
\author{Giuliano Basso}
\date{\today}
\maketitle
\begin{abstract}
We consider Lipschitz maps with values in quasi-metric spaces and extend 
such maps to finitely many points. We prove that in this context every 1-Lipschitz map admits an extension such that its
Lipschitz constant is bounded from above by the number of added
points plus one. Moreover, we prove that if the source space is a Hilbert space
and the target space is a Banach space, then there exists an
extension such that its Lipschitz constant is bounded from above by the square root of
the total of added points plus one. We discuss applications to metric transforms. 
\end{abstract}

\tableofcontents

\section{Introduction}
Lipschitz maps are generally considered as an indispensable tool in the study of metric spaces.
The need for a Lipschitz extension of a given Lipschitz map often presents itself naturally. 
Deep extension results have been obtained by Johnson, Lindenstrauss, and Schechtman \cite{johnson1986extensions}, Ball \cite{ball1992markov}, Lee and Naor \cite{lee2005extending}, and Lang and Schlichenmaier \cite{LangSchlichenmaier}. The literature surrounding Lipschitz extension problems is vast, for a recent monograph on the subject see \cite{brudnyi2011methods, brudnyimethods} and the references therein. Before we explain our results in detail, we start with a short presentation of what we will call the Lipschitz extension problem. Let \((X,\rho_X)\) be a \textit{quasi-metric space}, that is, the function \(\rho_X\colon X\times X\to \R\) is non-negative, symmetric and vanishes on the diagonal, cf. \cite[p. 827]{schoenberg1938}. Unfortunately, the term ``quasi-metric space'' has several different meanings in the mathematical literature. In the present paper, we stick to the definition given above. Let \(S\subset X\) be a subset and let \((Y, \rho_Y)\) be a quasi-metric space. A \textit{Lipschitz map} is a map \(f\colon S\to Y\) such that
the quantity
\begin{equation*}
\Lip(f):=\inf\left\{ L\geq 0  : \textrm{for all points } x,x^\prime\in S\colon \rho_Y(f(x), f(x^\prime)) \leq L \rho_X(x,x^\prime) \right\} 
\end{equation*}
is finite. We use the convention \(\inf \varnothing =+\infty\). We consider the following Lipschitz extension problem: 

\begin{question} Let  \((X,d_X)\) be a metric space, let \((Y,\rho_Y)\) be a quasi-metric space, and suppose that \(S\subset X\) is a subset of \(X\). Under what conditions on \(S, X\) and \(Y\)  is there a real number \(D\geq 1\) such that every Lipschitz map \(f\colon S\to Y\) has a Lipschitz extension \(\overline{f}\colon X \to Y\) with \(\Lip\big(\,\overline{f}\,\big)\leq D\Lip(f)\)?
\end{question}
Let \(e(X,S;Y)\) denote the infimum of the  \(D\)'s satisfying the desired property in the ``Lipschitz extension problem''. Given integers \(n, m\geq 1\), we define
\begin{equation*}
\begin{split}
e_n(X,Y)&:=\sup\big\{ e(X,S;Y) : S\subset X, \, \abs{S} \leq n\big\}, \\
e^m(X,Y)&:=\sup\big\{ e(S\cup T,S;Y) : S,T\subset X, \, S \textrm{ closed}, \, \abs{T} \leq m\big\}.
\end{split}
\end{equation*}
We use \(\abs{\,\cdot\,}\) or \(\card(\cdot)\) to denote the cardinality of a set. 
The Lipschitz extension modulus \(e_n(X,Y)\) has been studied intensively in various settings.
Nevertheless, many important questions surrounding \(e_n(X,Y)\) are still open, cf. \cite{naor2017lipschitz} for a recent overview. 

In the present article, we are interested in an upper bound for \(e^m(X,Y)\). 
We get the following result.
\begin{theorem}\label{cor:main}
Let \((X, d_X)\) be a metric space and let \((Y,\rho_Y)\) be a quasi-metric space. If \(m\geq 1\) is an integer, then 
\begin{equation}\label{main12}
e^{m}(X,Y) \leq m+1.
\end{equation}
\end{theorem}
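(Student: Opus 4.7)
The plan is to reduce the statement to a purely metric existence question about retractions, and then construct the retraction by an iterative argument.

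Because $(Y,\rho_Y)$ is only a quasi-metric space, its target has no triangle inequality and no convex or averaging structure, so there is no mechanism for "interpolating" in $Y$. Consequently, the only universal way to build an extension $\bar f\colon S\cup T\to Y$ is to pick values $\bar f(t):=f(\phi(t))$ lying in the image of $f$ for each $t\in T$, where $\phi(t)\in S$. Writing $\Phi\colon S\cup T\to S$ for the retraction defined by $\Phi|_S=\mathrm{id}_S$ and $\Phi|_T=\phi$, the extension becomes $\bar f=f\circ\Phi$, so $\Lip(\bar f)\le\Lip(f)\cdot\Lip(\Phi)$ with the Lipschitz constant of $\Phi$ measured with respect to $d_X$ on both source and target. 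The theorem therefore reduces to the claim that for every closed $S\subset X$ and every $T\subset X$ with $|T|\le m$ there exists an $(m+1)$-Lipschitz retraction $\Phi\colon S\cup T\to S$. Writing $r_t:=d_X(t,S)$, this in turn amounts to finding $\phi\colon T\to S$ satisfying
\begin{equation*}
d_X(\phi(t),t)\le m\,r_t \quad\text{and}\quad d_X(\phi(t),\phi(t'))\le (m+1)\,d_X(t,t')
\end{equation*}
for all $t,t'\in T$; the first inequality handles all pairs $(t,s)\in T\times S$ via the triangle inequality in $X$.

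First I would reduce to the case in which every $t\in T$ has an attained nearest point $s_t^\ast\in S$, by choosing $\varepsilon$-approximate nearest points and letting $\varepsilon\downarrow 0$. I then plan to construct $\phi$ iteratively: enumerate $T=\{t_1,\ldots,t_m\}$ in order of increasing $r_{t_i}$, set $\phi(t_1):=s_{t_1}^\ast$, and at each subsequent stage $k\ge 2$ assign $\phi(t_k)$ to any element of the candidate pool $\{s_{t_k}^\ast\}\cup\{\phi(t_j):j<k\}$ that satisfies both the single-point condition at $t_k$ and the pairwise condition against every previously placed $\phi(t_j)$. The driving intuition is a dichotomy: either the fresh nearest point $s_{t_k}^\ast$ fits outright, or else some earlier $t_j$ lies close to $t_k$ relative to the $r$-scale, in which case reusing $\phi(t_j)$ provides a valid choice.

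The principal obstacle will be showing that the candidate pool always contains an admissible choice. I would argue this by contradiction: the failures of the various candidates at stage $k$ encode quantitative proximity constraints among the points $t_i$, and chaining these constraints via the triangle inequality in $X$ should yield an outright contradiction. The ordering by increasing $r_t$ is crucial here, because it guarantees that whenever a previously chosen $\phi(t_j)$ is propagated to $\phi(t_k)$, the inherited displacement bound $d_X(\phi(t_j),t_j)\le m\,r_j$ does not deteriorate, as $r_j\le r_k$. Making the contradiction tight enough to yield the precise constant $m+1$ (rather than something like $2^m$ that falls out of a naive one-point-at-a-time induction) requires carefully tracking the transitive propagation of the single-point displacement through cascades of previous merges, and this is where the bulk of the technical work is likely to lie.
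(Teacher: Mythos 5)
Your reduction is sound and is in fact the same first step the paper takes: the extension is sought in the form $\bar f=f\circ\Phi$ for a retraction $\Phi\colon S\cup T\to S$ landing on ($\varepsilon$-approximate) nearest points, and your two displayed inequalities do suffice, via the triangle inequality in $X$, to give $\Lip(\Phi)\le m+1$ (the $\varepsilon$-loss from approximate nearest points disappears because $\mathsf{e}$ is an infimum). The gap is that the step you yourself flag as the principal obstacle --- that the pool $\{s_{t_k}^\ast\}\cup\{\phi(t_j):j<k\}$ always contains an admissible candidate --- is the entire content of the theorem, and the invariants you propose to maintain do not survive the reuse step. Concretely, if at stage $k$ the fresh candidate fails against some $\phi(t_j)$, then $d(s_{t_k}^\ast,\phi(t_j))>(m+1)d(t_k,t_j)$ combined with $d(s_{t_k}^\ast,\phi(t_j))\le r_{t_k}+d(t_k,t_j)+m\,r_{t_j}\le (m+1)r_{t_k}+d(t_k,t_j)$ yields only $d(t_k,t_j)<\tfrac{m+1}{m}r_{t_k}$, so the reused value satisfies merely $d(\phi(t_j),t_k)\le m\,r_{t_j}+\tfrac{m+1}{m}r_{t_k}$, which already exceeds the budget $m\,r_{t_k}$ of your first invariant when $r_{t_j}=r_{t_k}$. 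Worse, your second invariant between two values inherited through two separate cascades needs the total $d_X$-length of both chains to be at most $m\,d(t_k,t_j)$, but the individual hops in those chains were certified at earlier stages against entirely different distances and need not be bounded by $d(t_k,t_j)$. So the claimed dichotomy is not established, and it is unclear that it even holds for the ordering you chose.

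The paper resolves exactly this difficulty with a different greedy: it runs Kruskal's algorithm on the graph whose vertices are the $m$ added points together with a finite set $F$ of $\varepsilon$-approximate nearest points, with edges inside $X\setminus S$ and between $X\setminus S$ and $F$, processed in non-decreasing order of length and rejected exactly when they would create a cycle or connect two points of $F$. Each added point $z$ is then sent to the unique $F$-point $x_z$ at the end of its tree-path $\gamma_z$. The Kruskal rejection property supplies precisely the two facts your scheme lacks: every edge of $\gamma_z$ has length at most $(1+\varepsilon)d(z,S)$ (compare with the rejected edge joining $z$ to its own approximate nearest point), so $d(x_z,z)\le L_z(1+\varepsilon)d(z,S)$ with $L_z\le m$; and for $z\ne z'$ with $x_z\ne x_{z'}$ every edge of both paths has length at most $d(z,z')$ (compare with the rejected edge $\{z,z'\}$), while the paths lie in different trees so $L_z+L_{z'}\le m$, giving $d(x_z,x_{z'})\le (L_z+1+L_{z'})\,d(z,z')\le(m+1)d(z,z')$. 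To salvage your vertex-by-vertex scheme you would need to build in an analogue of this edge-ordering comparison; as written, the key lemma is missing.
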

A constructive proof of Theorem 1.1 is given in Section \ref{sec:two}. The estimate \eqref{main12} is optimal.  This follows from the following simple example. We set \(P_{m+1}:=\{0, 1, \ldots, m+1\}\subset \R\) and we consider the subset \(S=Y=\{0, m+1\}\subset P_{m+1}\) and the map \(f\colon S\to Y\) given by \(x\mapsto x\). Suppose that \(F\colon P_{m+1} \to Y\) is a Lipschitz extension of \(f\) to \(P_{m+1}\). Without effort it is verified that \(\Lip(F)=(m+1)\Lip(f)\); hence, it follows that  \eqref{main12} is sharp. The sharpness of Theorem \ref{cor:main} allows us to obtain a lower bound for the parameter \(\alpha(\omega)\) of the dichotomy theorem for metric transforms \cite[Theorem 1]{mendel2011note}, see Corollary \ref{cor:Ftrans}. If the condition that the subset \(S\subset X\) has to be closed is removed  in the definition of \(e^m(X,Y)\), then Theorem \ref{cor:main} is not valid. Indeed, if  \((X,d_X)\) is not complete and \(z\in \overline{X}\) is a point contained in the completion \(\overline{X}\) of \(X\) such that \(z\notin X\), then the identity map \(\id_X\colon X\to X\) does not extend to a Lipschitz map \(\overline{\id_X}\colon X\cup \{z\}\to X\) if we equip \(X\cup \{z\} \subset \overline{X}\) with the subspace metric. This is a well-known obstruction.  As pointed out by Naor and Mendel, there is the following upper bound of  \(e^m(X,Y)\) in terms of  \(e_m(X,Y)\) . 

\begin{lemma}[\textrm{Claim 1 in \cite{mendel2017relation}}]\label{Lem:Naor} 
Let \((X, d_X)\) and \((Y,d_Y)\) be two metric spaces. If \(m\geq 1 \) is an integer, then
\begin{equation*}
e^m(X,Y) \leq e_m(X,Y)+2.
\end{equation*}
\end{lemma}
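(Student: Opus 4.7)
The plan is to combine a nearest-point approximation on $S$ with a single application of $\mathsf{e}_m(X,Y)$ on the resulting small set of witnesses. Since multiplying the target metric by a scalar does not change anything, it suffices to prove that for every $1$-Lipschitz map $f\colon S\to Y$ and every $\eta>0$ there is an extension $\bar{f}\colon S\cup T\to Y$ with $\Lip(\bar{f})\leq \mathsf{e}_m(X,Y)+2+\eta$; then $\eta\to 0$ gives the claim.

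First I would exploit that $S$ is closed. Fix a small $\epsilon>0$. For every $t\in T\setminus S$ one has $d_X(t,S)>0$, so I can select $s_t\in S$ with $d_X(t,s_t)\leq (1+\epsilon)d_X(t,S)$. Let $S_0:=\{s_t:t\in T\setminus S\}\subset S$; by construction $\lvert S_0\rvert \leq \lvert T\setminus S\rvert\leq m$. The restriction $f|_{S_0}\colon S_0\to Y$ is still $1$-Lipschitz, so by the definition of $\mathsf{e}_m(X,Y)$ there exists an extension $g\colon X\to Y$ of $f|_{S_0}$ with $\Lip(g)\leq \mathsf{e}_m(X,Y)+\epsilon$. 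Now define $\bar{f}\colon S\cup T\to Y$ by setting $\bar{f}(x):=f(x)$ for $x\in S$ and $\bar{f}(t):=g(t)$ for $t\in T\setminus S$ (this is well-defined because $g$ agrees with $f$ on $S_0$).

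The verification of the Lipschitz estimate splits into three cases. For $x,x'\in S$ we directly have $d_Y(\bar{f}(x),\bar{f}(x'))\leq d_X(x,x')$. For $t,t'\in T\setminus S$ the map $g$ gives $d_Y(\bar{f}(t),\bar{f}(t'))\leq (\mathsf{e}_m(X,Y)+\epsilon)d_X(t,t')$. The critical mixed case $x=s\in S$, $x'=t\in T\setminus S$ I would handle by inserting the witness $s_t$:
\begin{align*}
d_Y(f(s),g(t)) &\leq d_Y(f(s),f(s_t))+d_Y(g(s_t),g(t)) \\
&\leq d_X(s,s_t)+(\mathsf{e}_m(X,Y)+\epsilon)\, d_X(s_t,t) \\
&\leq d_X(s,t)+\bigl(1+\mathsf{e}_m(X,Y)+\epsilon\bigr)d_X(t,s_t) \\
&\leq d_X(s,t)+\bigl(1+\mathsf{e}_m(X,Y)+\epsilon\bigr)(1+\epsilon)\, d_X(t,s),
\end{align*}
where in the last step I use $d_X(t,S)\leq d_X(t,s)$. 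Taking the maximum over the three cases and letting $\epsilon\to 0$ produces the constant $\mathsf{e}_m(X,Y)+2$, finishing the proof.

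There is no serious obstacle; the only subtlety is keeping the cardinality bound $\lvert S_0\rvert\leq m$ intact (which is why one picks one witness per point of $T\setminus S$ rather than all nearest points) and using closedness of $S$ to guarantee that the witnesses $s_t$ exist with $d_X(t,s_t)$ arbitrarily close to $d_X(t,S)$. The bound $\mathsf{e}_m(X,Y)+2$ comes out of the triangle-inequality step as $1+1+\mathsf{e}_m(X,Y)$, where the two $+1$ contributions correspond respectively to $d_X(s,t)$ and $d_X(t,s_t)\leq d_X(t,s)$.
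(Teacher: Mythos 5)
Your argument is correct: choosing one near-nearest witness $s_t\in S$ for each $t\in T\setminus S$ (possible because $S$ is closed, so $d_X(t,S)$ is approximable from $S$), extending $f|_{S_0}$ with constant at most $\mathsf{e}_m(X,Y)+\epsilon$ since $\lvert S_0\rvert\leq m$, and routing the mixed case through $s_t$ via the triangle inequality gives $1+(1+\mathsf{e}_m(X,Y)+\epsilon)(1+\epsilon)\to \mathsf{e}_m(X,Y)+2$. The paper does not prove this lemma itself but merely cites Claim 1 of the Mendel--Naor note, and your proof is precisely the standard argument behind that claim, so there is nothing further to compare.
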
 

By the use of Lemma \ref{Lem:Naor} and \cite[Theorem 1.10]{lee2005extending}, one can deduce that if \((X,d_X)\) is a metric space and \((E, \norm{\cdot}_{_E})\) is a Banach space, then
\begin{equation*}
e^m(X,E) \lesssim \frac{\log(m)}{\log\big(\log(m)\big)}
\end{equation*}
for all integers \(m\geq 3\), where the notation \(A \lesssim B\) means  \(A \leq C B\) for some universal constant \(C\in (0, +\infty)\).
As a result, for sufficiently large integers \(m\geq 3\) the estimate in Theorem \ref{cor:main} is not optimal if we restrict the target spaces to the class of Banach spaces. In Section \ref{sec:sharpness}, we present an example that shows that for Banach space targets the estimate \eqref{main12} is sharp if \(m=1\). As a byproduct of the construction in Section \ref{sec:sharpness}, we obtain the lower bound
\begin{equation}\label{eq:estimate100}
e(\ell_2, \ell_1) \geq \sqrt{2},
\end{equation}
where \(e(\ell_2, \ell_1):=\sup\big\{ \,e(\ell_2, S; \ell_1) : S\subset \ell_2\big\}\). It is unknown if \(e(\ell_2, \ell_1)\) is finite or infinite. This question has been raised by Ball, cf. \cite{ball1992markov}. 
Let \((Y,\rho_Y)\) be a quasi-metric space and let \(F\colon [0,+\infty) \to [0,+\infty)\) be a map with \(F(0)=0\). The \textit{\(F\)-transform of \(Y\)}, denoted by \(F[Y]\), is by definition the quasi-metric space \((Y, F\circ \rho_Y)\).
Our main result can be stated as follows:
\begin{theorem}\label{hopefullySolved}
Let \((H, \langle \cdot, \cdot \rangle_{_H})\) be a Hilbert space and let \((E, \norm{\cdot}_{_E})\) be a Banach space. Suppose that \(F\colon [0,+\infty)\to [0,+\infty)\) is a map such that the composition \(F(\sqrt{\cdot})\) is a strictly-increasing concave function with \(F(0)=0\).   If \(X\subset F[H]\) is a finite subset, \(S\subset X\), and \(f\colon S\to E\) is a map, 
then there is a Lipschitz extension \(\overline{f}\colon X\to \Conv(f(S))\) such that
\begin{equation}\label{main123}
\Lip\left(\,\overline{f}\,\right) \leq  \sup_{x>0} \frac{F\big(\sqrt{m+1} \,x\big)}{F(x)}\,\Lip(f),
\end{equation}
where \(m:=\abs{X\setminus S}\).
\end{theorem}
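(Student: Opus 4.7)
The plan is first to reduce \eqref{main123} to a cleaner pointwise ``\(\sqrt{m+1}\)-Hilbertian'' bound, and then to build the extension by an averaging procedure based on metric projections in \(H\). Since \(F\) is non-decreasing, any extension \(\overline{f}\colon X \to \Conv(\Image f)\) satisfying
\[
\|\overline{f}(x) - \overline{f}(y)\|_E \leq \Lip(f)\, F\!\left(\sqrt{m+1}\, d_H(x,y)\right) \qquad (x, y \in X)
\]
automatically fulfills \eqref{main123}: dividing both sides by \(F(d_H(x,y))\) and taking \(\sup_{x \neq y}\) yields exactly the stated bound, since the supremum on the right is taken over a subset of \(\{t > 0\}\). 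Hence it suffices to construct such an extension.

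I would proceed by induction on \(m=|X\setminus S|\), with the inductive step being a single-point extension lemma. Given a finite set \(T \subset H\), a Lipschitz map \(g\colon T \to E\) with respect to \(F\circ d_H\), and a new point \(w\in H\setminus T\), set \(\overline{g}(w) := \sum_{t\in T} \lambda_t\, g(t)\), where the convex weights \((\lambda_t)_{t \in T}\) come from a Carath\'eodory decomposition of the metric projection of \(w\) onto \(\Conv(T)\subset H\). This choice keeps \(\overline g(w)\) inside \(\Conv(\Image g)\). The analysis then rests on two Hilbertian tools: the variance identity
\[
\sum_{t\in T} \lambda_t\, \|t - s\|_H^2 = \Bigl\|\sum_{t\in T} \lambda_t\, t - s\Bigr\|_H^2 + \sum_{t\in T} \lambda_t \Bigl\|t - \sum_{t'\in T} \lambda_{t'}\, t'\Bigr\|_H^2 \qquad (s\in H),
\]
and the first-order optimality inequality \(\langle w - \pi_{\Conv(T)}(w),\, t - \pi_{\Conv(T)}(w)\rangle_H \leq 0\) for \(t \in \Conv(T)\). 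Jensen's inequality, pulled through the concave map \(F(\sqrt{\cdot})\), then converts Hilbertian square-root sums into \(F\)-values of the required form.

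The main obstacle is that a naive induction paying a constant factor (e.g.\ \(\sqrt{2}\)) at each single-point insertion produces a total blow-up of \(2^{m/2}\), far worse than \(\sqrt{m+1}\). The fix is to arrange that the blow-up at the \(k\)-th insertion equals \(\sqrt{(k+1)/k}\), so that the product telescopes to \(\sqrt{m+1}\). Achieving this refinement requires choosing the weights at step \(k\) coherently with the convex-combination structure of the previously constructed values of \(\overline{f}\) --- exploiting the fact that those values already live in \(\Conv(\Image f)\) --- rather than treating each insertion in isolation. The most delicate point is controlling \(\|\overline{f}(z_i) - \overline{f}(z_j)\|_E\) between two newly-added points, which the single-point lemma does not directly address; this pairing is treated by re-applying the variance identity to the joint configuration of \(z_i\), \(z_j\), and \(S\), so that the Hilbertian squared distances associated to the two new points combine additively rather than multiplicatively, which is precisely what the factor \(\sqrt{m+1}\) reflects.
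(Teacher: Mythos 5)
Your opening reduction (replacing \eqref{main123} by the pointwise bound \(\norm{\overline{f}(x)-\overline{f}(y)}_E\leq \Lip(f)F(\sqrt{m+1}\,d_H(x,y))\)) is fine, but the proof has two genuine gaps. First, the single-point extension lemma as you formulate it is false. Taking the weights \((\lambda_t)\) from a Carath\'eodory decomposition of \(\pi_{\Conv(T)}(w)\) makes \(\overline{g}(w)\) depend only on the Hilbertian geometry of \(T\), not on \(g\), and this cannot work: let \(T=\{a,b,c\}\subset\R^2\) with \(a=-Re_1\), \(b=Re_1\), \(c=\epsilon e_2\), let \(w=0\), and let \(g\colon T\to\ell_\infty^2\) be given by \(g(a)=(-R/2,-R/2)\), \(g(b)=(R/2,R/2)\), \(g(c)=(R/2,-R/2)\). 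One checks that \(g\) is \(1\)-Lipschitz, and the unique convex representation of \(\pi_{\Conv(T)}(0)=0\) is \(\tfrac12 a+\tfrac12 b\) (since \(c\) is the only point with nonzero \(e_2\)-component), so your recipe forces \(\overline{g}(0)=(0,0)\); but then \(\norm{\overline{g}(0)-g(c)}_\infty=R/2\), while the required bound is \(\sqrt{2}\,\epsilon\), which fails for \(R\gg\epsilon\). The correct weights must be allowed to depend on \(f\); in the paper they arise as the dual variable \(\bm{\lambda}\) produced by a hyperplane separation applied to the convex set of admissible distance profiles, and the extension points are the convex combinations \(w_i=\sum_j c_{ij}(\sum_k\lambda_{jk})\,y_{\bar{\bm{\lambda}}_j}\) built from the inverse of the M-matrix \(M(\bm{\lambda},T)\).

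Second, even granting a correct one-point lemma, the passage to \(m\) points with total loss \(\sqrt{m+1}\) is the actual content of the theorem, and your proposal only names the difficulty (``choose the weights coherently so the factors telescope''; ``re-apply the variance identity to the joint configuration'') without supplying a mechanism. The paper does not induct at all: it runs a single global duality argument, in which the lower bound comes from minimizing the quadratic form \(\Phi(\cdot,\bm{\lambda},\id)\) (Proposition \ref{prop:quadratic}), and the troublesome term --- exactly the one you flag, namely \(\sum_{i,j\in T}\lambda_{ij}G(\norm{w_i-w_j}_E)\) between pairs of newly added points --- is controlled by the M-matrix inequality
\begin{equation*}
\frac{1}{2}\sum_{i=1}^m\sum_{j=1}^m\abs{m_{ij}}\,\abs{c_{ik}c_{j\ell}-c_{jk}c_{i\ell}}\leq (m-1)\,c_{k\ell}
\end{equation*}
of Theorem \ref{thm:estiMate}, whose proof requires a genericity and sign-pattern analysis of the \(2\times 2\) minors of inverse M-matrices. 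Nothing in your outline plays the role of this inequality, and without it the cross terms between new points give you the multiplicative \(2^{m/2}\) blow-up you yourself identify rather than the additive combination you assert.
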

Theorem \ref{hopefullySolved} is optimal if \(m=1\) and \(F=\id\), see Proposition \ref{prop:lower}.  Via this sharpness result we obtain that certain \(F\)-transforms of \(\ell_p\), for \(p>2\), do not isometrically embed into \(\ell_2\), see Corollary \ref{cor:snowflake}. Suppose that \(F\colon [0, +\infty)\to [0,+\infty)\) is a strictly-increasing continuous function such that the \(F\)-transform of \(\ell_2\) embeds isometrically into a Hilbert space. By a celebrated result of Schoenberg
\(F(\sqrt{\cdot})^2\) is a Bernstein function, cf. \cite[Theorem \(6^\prime\)]{schoenberg1938}; thus, the function \(F(\sqrt{\cdot})\) is concave and therefore satisfies the assumptions on \(F\) in Theorem \ref{hopefullySolved}. This provides a natural class of examples for which Theorem \ref{hopefullySolved} may be applied.
Let \(0 <\alpha \leq 1\) and \(L\geq 0\) be real numbers.
An \textit{\((\alpha, L)\)-H\"older map} is a map \(f\colon X\to Y\) such that
\[d_Y(f(x), f(x^\prime)) \leq L d_X(x,x^\prime)^\alpha\]
for all points \(x,x^\prime \in X\). 
By considering the function \(F(x)=x^\alpha\), with \(0 < \alpha \leq 1\), we obtain the following direct corollary of Theorem \ref{hopefullySolved}.

\begin{corollary}
Let \((H, \langle \cdot, \cdot \rangle_{_H})\) be a Hilbert space, let \((E, \norm{\cdot}_{_E})\) be a Banach space and let \(0 <\alpha \leq 1\) and \(L\geq 0\) be real numbers.
If \(X\subset H\) is a finite subset, \(S\subset X\), and \(f\colon S\to E\) is an \((\alpha, L)\)-H\"older map,
then there is an extension \(\overline{f}\colon X\to \Conv(f(S))\) of \(f\) such that \(\overline{f}\) is an \((\alpha, \overline{L}\,)\)-H\"older map with  
\begin{equation*}
 \overline{L} \leq \left(\sqrt{m+1}\right)^\alpha \, L,
\end{equation*}
where \(m:=\abs{X\setminus S}\).
\end{corollary}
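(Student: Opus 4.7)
The plan is to deduce the corollary as a direct application of Theorem \ref{hopefullySolved} with the explicit choice \(F(x)=x^\alpha\), so almost all of the work reduces to checking hypotheses and translating between the Hölder formulation and the Lipschitz formulation with respect to the transformed metric.

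First I would verify the assumptions on \(F\). For \(F(x)=x^{\alpha}\) with \(0<\alpha\le 1\), we have \(F(0)=0\), and the composition \(F(\sqrt{\cdot})\) is the map \(x\mapsto x^{\alpha/2}\); since \(\alpha/2\in (0,1/2]\), this is strictly increasing and concave on \([0,+\infty)\), so the hypotheses of Theorem \ref{hopefullySolved} are met. Next I would reinterpret the given \((\alpha,L)\)-Hölder map \(f\colon S\to E\) as a Lipschitz map from \(S\subset F[H]\) into \(E\). Indeed, since the metric of \(F[H]\) is \(F\circ d_H=d_H^{\,\alpha}\), the Hölder inequality \(\|f(x)-f(x')\|_{E}\le L\,\|x-x'\|_{H}^{\alpha}\) is exactly the statement that \(f\) is \(L\)-Lipschitz when \(S\) is equipped with the subspace metric inherited from \(F[H]\).

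Now I would apply Theorem \ref{hopefullySolved} to the finite subset \(X\subset F[H]\), its subset \(S\), and the map \(f\). This produces an extension \(\overline{f}\colon X\to \Conv(\mathsf{Im}(f))\) whose Lipschitz constant, still computed with respect to the \(F[H]\)-metric, satisfies
\begin{equation*}
\Lip\!\left(\,\overline{f}\,\right) \;\le\; \sup_{x>0}\frac{F(\sqrt{m+1}\,x)}{F(x)}\cdot L \;=\; \sup_{x>0}\frac{(\sqrt{m+1}\,x)^{\alpha}}{x^{\alpha}}\cdot L \;=\; \bigl(\sqrt{m+1}\bigr)^{\alpha}L,
\end{equation*}
since the supremum collapses to the constant \((\sqrt{m+1})^{\alpha}\) when \(F\) is a pure power. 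Finally I would translate the Lipschitz bound back into Hölder language: for all \(x,x'\in X\),
\begin{equation*}
\|\overline{f}(x)-\overline{f}(x')\|_{E}\;\le\;\Lip\!\left(\,\overline{f}\,\right)\cdot \|x-x'\|_{H}^{\alpha}\;\le\; \bigl(\sqrt{m+1}\bigr)^{\alpha}L\cdot \|x-x'\|_{H}^{\alpha},
\end{equation*}
so \(\overline{f}\) is \((\alpha,\overline{L})\)-Hölder with \(\overline{L}\le (\sqrt{m+1})^{\alpha}L\), as required.

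There is no genuine obstacle here; the only subtle point is noticing that Theorem \ref{hopefullySolved} is stated in terms of Lipschitz constants measured in the \(F\)-transform metric, so one has to be careful that ``Lipschitz with respect to \(F[H]\)'' is the correct reading of ``\((\alpha,L)\)-Hölder with respect to \(H\)''. Once this identification is made, the corollary is immediate from the supremum identity above.
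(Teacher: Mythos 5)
Your proposal is correct and is exactly the argument the paper intends: the corollary is stated there as a direct consequence of Theorem \ref{hopefullySolved} applied to \(F(x)=x^{\alpha}\), and your verification that \(F(\sqrt{\cdot})=x^{\alpha/2}\) is strictly increasing and concave, together with the computation \(\sup_{x>0}F(\sqrt{m+1}\,x)/F(x)=(\sqrt{m+1})^{\alpha}\) and the Hölder--Lipschitz translation via the \(F\)-transform metric, fills in precisely the routine details the paper leaves implicit.
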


Along the lines of the proof of Claim 1 in \cite{mendel2017relation} one can show that if \((X,d_X)\) and \((Y, d_Y)\) are metric spaces, then for all integers \(m\geq 1\) we have
\begin{equation*}
e^m(X,Y) \leq \sup_{n\geq 1} \,e^m_{n}(X,Y)+2,
\end{equation*}
where
\begin{equation*}
e_n^m(X,Y):=\sup\big\{ e(S\cup T,S;Y) : S,T\subset X,\, \abs{S} \leq n,\, \abs{T} \leq m\big\}.
\end{equation*}
Thus, by the use of Theorem \ref{hopefullySolved}, we may deduce that if \(H\) is a Hilbert space and \(E\) is a Banach space, then
\begin{equation}\label{eq:estLog}
e^m(H,E) \leq \sqrt{m+1}+2
\end{equation}
for all integers \(m\geq 1\).  In \cite[Theorem 1.12]{lee2005extending}, Lee and Naor demonstrate that \(e_n(H, E) \lesssim \sqrt{\log(n)}\) for all integers \(n\geq 2\). Thus, via this estimate (and Lemma \ref{Lem:Naor})  it is possible to obtain the upper bound
\begin{equation*}
e^m(H,E) \lesssim \sqrt{\log(m)}
\end{equation*}
that has a better asymptotic behaviour than estimate \eqref{eq:estLog}. However, since Lee and Naor use different methods, we believe that our approach has its own interesting aspects.

The paper is structured as follows. In Section \ref{sec:app}, we derive some corollaries of our main results. In Section \ref{sec:two} we prove Theorem \ref{cor:main} and in Section \ref{sec:sharpness} we show that our extension results are sharp for one point extensions. In \cite{ball1992markov}, Ball introduced the notions of Markov type and Markov cotype of Banach spaces.  To establish Theorem \ref{hopefullySolved} we estimate quantities that are of similar nature. The necessary estimates are obtained in Section \ref{sec:quadraticForm} and Section \ref{sec:betterName}. In Section \ref{sec:betterName}, we deal with M-matrices, which appear naturally in the proof of Theorem \ref{hopefullySolved}. M-matrices have first been considered by Ostrowski, cf. \cite{ostrowski1937determinanten}, and since then have been investigated in many areas of mathematics, cf. \cite{poole1974survey}. The main result of Section \ref{sec:betterName}, Theorem \ref{thm:estiMate},  may be of independent interest for the general theory of M-matrices. Finally, a proof of Theorem \ref{hopefullySolved} is given in Section \ref{sec:mainProof}. 

\section{Embeddings and indices of \(F\)-transforms}\label{sec:app}
In this section we collect some applications of our main theorems. 
Let \((X,\rho_X)\) and \((Y, \rho_Y)\) be quasi-metric spaces and let \(f\colon X\to Y\) be an injective map. We set \(\dist(f):=\Lip(f)\Lip(f^{-1})\) and
\[c_Y(X):=\inf \big\{\dist(f) : f\colon X\to Y \textrm{ injective } \big\}.\]
The sharpness of \eqref{main123} if \(m=1\) allows us to derive a necessary condition for an \(F\)-transform of an \(\ell_p\)-space to embed into a Hilbert space. 

\begin{corollary}\label{cor:snowflake} 
Let \((H, \langle \cdot, \cdot \rangle_{_H})\) be a Hilbert space and
suppose that \(F\colon [0,+\infty) \to [0,+\infty)\) is a function such that \(F(0)=0\) and
\begin{equation*}
\sup\limits_{x>0} \frac{F(x)}{x} <+\infty.
\end{equation*} 
If \(p\in [1,+\infty]\) is an extended real number and
\[\sup \big\{ c_{H}(A) : A \subset F[\ell_p], \, A \textrm{ finite } \big\} \leq 2^\epsilon, \quad \textrm{ where } \epsilon \in \big[0,\frac{1}{2}\big),\]
then \(p \leq \left(\frac{1}{2}-\epsilon\right)^{-1}\). 

\end{corollary}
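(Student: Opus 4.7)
My plan is to argue by contrapositive, using a specific four-point configuration in $F[\ell_p]$ together with the Hilbert parallelogram (short-diagonals) inequality; this inequality is the quantitative essence of the sharpness of Theorem \ref{hopefullySolved} at $m = 1$.

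For each $y > 0$ I would consider the four-point subset $A_y := \{\pm (y/2) e_1, \pm (y/2) e_2\}$ of $F[\ell_p]$. In $\ell_p$ the two antipodal pairs are at distance $y$ and the four cross pairs at distance $y \cdot 2^{1/p - 1}$, so in $F[\ell_p]$ they sit at distances $F(y)$ and $F(y \cdot 2^{1/p - 1})$ respectively. By hypothesis there is an injection $\phi \colon A_y \to H$ of distortion arbitrarily close to at most $2^\epsilon$; after rescaling I obtain a constant $\lambda > 0$ with
\[\lambda F(y) \le \|\phi(a) - \phi(b)\|_H \le 2^\epsilon \lambda F(y)\]
on antipodal pairs and the analogous inequality with $F(y \cdot 2^{1/p - 1})$ on cross pairs. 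Ordering the images cyclically as $q_1 = \phi((y/2) e_1)$, $q_2 = \phi((y/2) e_2)$, $q_3 = \phi(-(y/2) e_1)$, $q_4 = \phi(-(y/2) e_2)$ places the two diagonals $q_1 q_3, q_2 q_4$ on the antipodal pairs and the four sides on the cross pairs. The Hilbert identity
\[\sum_{i \in \mathbb{Z}/4} \|q_i - q_{i+1}\|^2 - \|q_1 - q_3\|^2 - \|q_2 - q_4\|^2 = \|(q_1 - q_2) + (q_3 - q_4)\|^2 \ge 0,\]
combined with the distortion bounds, then yields $F(y) \le \sqrt{2} \cdot 2^\epsilon \, F(y \cdot 2^{1/p - 1})$.

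Dividing by $y$ rewrites this inequality as $F(y)/y \le 2^{\epsilon + 1/p - 1/2} \cdot F(y \cdot 2^{1/p - 1})/(y \cdot 2^{1/p - 1})$. Let $L := \sup_{x > 0} F(x)/x$. The hypothesis gives $L < \infty$ and forces $L > 0$: if $F \equiv 0$, then $F[\ell_p]$ carries the zero pseudo-metric and every two-point subset has infinite Hilbert distortion, contradicting the assumption. Bounding the rightmost factor by $L$ and choosing $y$ so that $F(y)/y$ approaches $L$ forces $L \le 2^{\epsilon + 1/p - 1/2} L$, and cancelling $L$ gives $1/p \ge 1/2 - \epsilon$, i.e., $p \le (1/2 - \epsilon)^{-1}$.

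The main obstacle will be selecting the correct configuration: the antipodal choice $\pm e_1, \pm e_2$ is what produces the ratio $2^{1 - 1/p}$ between antipodal and cross $\ell_p$-distances, and this is precisely the ratio that the short-diagonals inequality in $H$ controls by $\sqrt{2}$ times the distortion. Any other four-point configuration would give a weaker exponent; once the configuration is fixed, only elementary manipulation of $F$ and the supremum $L$ remains.
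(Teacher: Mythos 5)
Your argument is correct, but it takes a genuinely different route from the paper's. The paper deduces the corollary from the interplay of its two sharpness results: it embeds the $2^k$ Walsh vertices of Proposition \ref{prop:lower} (together with the origin) into $H$ through the hypothesized low-distortion map, applies Theorem \ref{hopefullySolved} with $m=1$ to get an extension into $\ell_1$ with constant at most $(1+\delta)2^\epsilon\sqrt{2}$, compares this with the forced lower bound $\left(2-2^{1-k}\right)^{1/p_\star}$, and lets $k\to+\infty$ and $\delta\to 0$. You bypass both of those results entirely and use only the short-diagonals identity in Hilbert space on the four-point set $\{\pm(y/2)e_1,\pm(y/2)e_2\}$; your $\ell_p$-distance computation, the resulting inequality $F(y)\le\sqrt{2}\,2^\epsilon F\big(y\,2^{1/p-1}\big)$, and the passage to $L=\sup_{x>0}F(x)/x$ all check out and yield the identical exponent $1/p\ge\tfrac12-\epsilon$ (including the case $p=+\infty$, where the inequality forces $L=0$). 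Your proof is more elementary and self-contained, and in fact only uses the hypothesis for $4$-point subsets; the paper's proof is longer but serves to exhibit the corollary as a consequence of its main theorems. Two small points to polish: carry the factor $(1+\delta)$ from $\mathsf{dist}(\phi)\le(1+\delta)2^\epsilon$ through the computation before letting $\delta\to 0$ (and note that the normalization $\lambda:=\Lip(\phi^{-1})^{-1}$ is legitimate because both Lipschitz constants are finite and positive when all six quasi-distances of $A_y$ are positive); and your exclusion of $L=0$ rests on the convention that $\Lip(f)\Lip(f^{-1})=+\infty\cdot 0$ is infinite for a two-point set at quasi-distance zero --- a degeneracy the corollary's statement shares and which the paper's own proof also sweeps aside when it divides by $F(A)$, so it is acceptable, but it deserves the one-line remark you gave it.
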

The proof of Corollary \ref{cor:snowflake} is given at the end of Section \ref{sec:sharpness}. 
If \( 2 < p < +\infty\) is a real number and the \(F\)-transform \(F[\ell_p]\) embeds isometrically into a Hilbert space, then
\[F(x)=F_a(x)=
\begin{cases}
0 & x=0 \\
a & x>0 
\end{cases}
\quad \textrm{ where } a \geq 0;
\]
this follows essentially by combining a result of Kuelbs \cite[Corollary 3.1]{kuelbs1973positive} with a classical result that relates isometric embeddings to positive definite functions, cf.  for example \cite[Theorem 4.5]{WellsJamesH1975Eaei}. Furthermore, by a result of Johnson and Randrianarivony, \(\ell_p\) with \(p > 2\) does not admit a coarse embedding into \(\ell_2\), cf. \cite{10.2307/4098068,mendel2008metric}. 

We proceed with an application of Theorem \ref{cor:main}. 
Let \(F\colon [0,+\infty)\to [0,+\infty)\) be a function with \(F(0)=0\).
Suppose that \(F\) is subadditive and strictly increasing.  We define
\[D_F(\alpha)=\sup_{x >0} \frac{F(\alpha x)}{F(x)}\]
for all \(\alpha \geq 0\). Clearly, the function \(D_F\colon [0,+\infty)\to [0,+\infty) \) is finite, submutliplicative and non-decreasing. Moreover, 
\[F(\alpha x) \leq D_F(\alpha) F(x)\]
for all real numbers \(x, \alpha \geq 0\). The \textit{upper index} of \(F\) is defined by
\begin{equation}\label{eq:limit1}
\beta(F)=\lim_{\alpha \to +\infty} \frac{\log(D_F(\alpha))}{\log(\alpha)}.
\end{equation}
The existence of the limit \eqref{eq:limit1} may be deduced via the general theory of subadditive functions, since \(D_F\) is submultiplicative and non-decreasing, cf. \cite[Remark 1.3 (b)]{LechMaligranda1985}. We have \(0 \leq \beta(F) \leq 1\), for \(F\) is subadditive. If \((X,d_X)\) is a metric space, we set
\[c_F(X):=\inf\big\{ c_{F[Y]}(X) : (Y, d_Y) \textrm{ metric space } \big\}.\]
In \cite[Theorem 1]{mendel2011note}, Mendel and Naor obtained a dichotomy theorem for the quantity \(c_F(X)\), if \(F\) is concave and non-decreasing. The upper index of \(F\) allows us to obtain lower bounds for the rate of growth of \(c_F(P_n)\), where \(P_n:=\{0,1, \ldots, n\}\subset \R\).

\begin{corollary}\label{cor:Ftrans}
Let \(F\colon [0,+\infty)\to [0,+\infty)\) be a strictly-increasing subadditive function with \(F(0)=0\). 
If \(\,0\leq \alpha < 1-\beta(F)\,\) is a real number, then there exists an integer \(N\geq 1\) such that
\[n^\alpha \leq c_F(P_n)\]
for all \(n \geq N\). 
\end{corollary}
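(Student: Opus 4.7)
The plan is to establish the distortion lower bound
\[
\mathsf{c}_F(P_n) \ge \frac{n}{\mathsf{D}_F(n)}
\]
by a short triangle-inequality argument in the spirit of the sharpness example for Theorem \ref{cor:main}, and then to convert this estimate into the claimed polynomial rate via the definition of the upper index $\beta(F)$.

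For the first step, I will fix an arbitrary metric space $(Y, d_Y)$ and an arbitrary bi-Lipschitz injection $\phi\colon P_n \to F[Y]$. Writing its distortion as $D = B/A$, there exist constants $A, B > 0$ such that
\[
A\,|i-j| \;\le\; F\bigl(d_Y(\phi(i),\phi(j))\bigr) \;\le\; B\,|i-j|
\]
for all $i,j \in P_n$. Setting $u := \max_{0 \le i < n} d_Y(\phi(i), \phi(i+1))$, the right-hand inequality at consecutive indices yields $F(u) \le B$; meanwhile, $F(0)=0$, strict monotonicity of $F$, and injectivity of $\phi$ together force $u > 0$ and $F(u) > 0$. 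The triangle inequality in $(Y, d_Y)$ gives $d_Y(\phi(0),\phi(n)) \le n u$, so monotonicity of $F$ combined with the defining property $F(n u) \le \mathsf{D}_F(n)\, F(u)$ yields
\[
A\,n \;\le\; F\bigl(d_Y(\phi(0),\phi(n))\bigr) \;\le\; F(n u) \;\le\; \mathsf{D}_F(n)\, F(u) \;\le\; \mathsf{D}_F(n)\, B,
\]
from which $D = B/A \ge n/\mathsf{D}_F(n)$. Taking the infimum over all choices of $Y$ and $\phi$ gives the desired lower bound on $\mathsf{c}_F(P_n)$.

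For the final step, given $0 \le \alpha < 1-\beta(F)$, I will fix $\epsilon > 0$ with $\beta(F) + \epsilon \le 1 - \alpha$ and invoke the limit \eqref{eq:limit1} defining $\beta(F)$ to produce an integer $N \ge 1$ such that $\mathsf{D}_F(n) \le n^{\beta(F)+\epsilon}$ for every $n \ge N$. Combining this with the first step,
\[
\mathsf{c}_F(P_n) \;\ge\; \frac{n}{\mathsf{D}_F(n)} \;\ge\; n^{\,1-\beta(F)-\epsilon} \;\ge\; n^{\alpha} \qquad \text{for all } n \ge N,
\]
which is the claimed inequality. I expect no serious technical obstacle here; the argument is essentially a path-length computation combined with the asymptotic definition of $\beta(F)$, and the only mildly delicate point is extracting the bound $F(u) \le B$ without appealing to a well-defined inverse of $F$, which is circumvented by working with $u$ directly rather than with $F^{-1}(B)$.
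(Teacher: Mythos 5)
Your proof is correct, but it reaches the key inequality by a different (and more self-contained) route than the paper. The paper obtains the bound \(m\leq \mathsf{D}_F(m)\,\mathsf{c}_F(P_m)\) (inequality \eqref{eq:lowerBoundF}) as an application of Theorem \ref{cor:main}: it first upgrades that theorem to the statement \eqref{eq:Ftrans} that \(\mathsf{e}^m(F[X],Y)\leq \sup_{x>0}F((m+1)x)/F(x)\), and then plays this off against the sharpness example \(\mathsf{e}^{m-1}(P_m,\{0,m\})=m\). You instead prove \(\mathsf{c}_F(P_n)\geq n/\mathsf{D}_F(n)\) directly: for any injection \(\phi\colon P_n\to F[Y]\) with \(A\,|i-j|\leq F(d_Y(\phi(i),\phi(j)))\leq B\,|i-j|\), the consecutive steps give \(F(u)\leq B\) for \(u:=\max_i d_Y(\phi(i),\phi(i+1))>0\), and the triangle inequality in \((Y,d_Y)\) together with \(F(nu)\leq\mathsf{D}_F(n)F(u)\) forces \(An\leq \mathsf{D}_F(n)B\). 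This is the same underlying path-length phenomenon that makes the paper's sharpness example work, but your version avoids the intermediate machinery of \(\mathsf{e}^m\) entirely, which makes the argument cleaner and easier to verify; the cost is only that it does not exhibit the corollary as a consequence of Theorem \ref{cor:main}, which is the point the paper wants to make. The final step is essentially identical in both treatments: the paper invokes Theorem 1.2 of Lech--Maligranda to get \(\mathsf{D}_F(\alpha)\leq\alpha^{\beta(F)+\epsilon}\) for large \(\alpha\), while you extract the same estimate directly from the limit \eqref{eq:limit1}; either way one concludes \(n^{\alpha}\leq n^{1-\beta(F)-\epsilon}\leq \mathsf{c}_F(P_n)\) for \(n\geq N\). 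Your attention to the points \(u>0\) and \(F(u)\leq B\) (working with \(u\) rather than \(F^{-1}(B)\)) is exactly the right care to take.
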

\begin{proof}
We may assume that \(\beta(F)<1\). 
Let \((Y, \rho_Y)\) be a quasi-metric space and let \((X, d_X)\) be a metric space. We may employ Theorem \ref{cor:main} to conclude that
\begin{equation}\label{eq:Ftrans}
e^m\big(F[X], Y\big) \leq \sup_{x >0} \frac{F\big((m+1) x\big)}{F(x)},
\end{equation}
for all integers \(m\geq 0\). We set \(Y_{m}:=\{0, m\}\subset P_{m}\). Since
\[e^{m-1} \big(P_{m}, Y_{m}\big)=m,\]
inequality \eqref{eq:Ftrans} asserts that 
\begin{equation}\label{eq:lowerBoundF}
m\leq \sup_{x >0} \frac{F( m x)}{F(x)} c_F(P_{m})=D_F(m) c_F(P_{m})
\end{equation}
for all \(m\geq 1\). 
Let \(\epsilon >0\) be a real number such that \(\alpha< 1-\beta(F)-\epsilon\). By the virtue of Theorem 1.2 in \cite{LechMaligranda1985} there exists a real number \(C\geq 0\) such that
\[D_F(\alpha) \leq \alpha^{\beta(F)+\epsilon}\]
for all \(\alpha \geq C\). Consequently, by the use of \eqref{eq:lowerBoundF} we obtain for all \(n\geq N:= \lceil C\rceil \) that
\[n^\alpha \leq n^{1-\beta(F)-\epsilon}\leq c_F(P_n),\]
as desired. 
\end{proof}
As a consequence of Corollary \ref{cor:Ftrans}, we conclude that if \(\beta(F)<1\), then the second possibility of the dichotomy \cite[Theorem 1]{mendel2011note} holds. 
Thus, there is the following natural question: If \(\beta(F)=1\), is it true that, then \(c_F(X)=1\) for all finite metric spaces \((X, d_X)\)?



\section{Proof of Theorem \ref{cor:main}}\label{sec:two}
In this section, we derive Theorem \ref{cor:main}. 

\begin{proof}[Proof of Theorem \ref{cor:main}]
Let \(S\subset X\) be a closed subset and let \(T\subset X\) be a finite subset such that \(S\cap T=\varnothing \) and \(\abs{T}\leq m\). Let \(f\colon S\to Y\) be a Lipschitz map. In what follows we construct for each \(\epsilon >0\) a map \(F_\epsilon\colon S\cup T\to Y\) that is a Lipschitz extension of \(f\) to \(S\cup T\) such that
 \(\Lip(F_{\epsilon})\leq \left((1+\epsilon)m+1\right)\Lip(f)\). We start with a few definitions. Fix \(\epsilon >0\). Let \(F\subset S\) be a finite subset such that for each point \(z\in T\) there is a point \(x\in F\) with
\begin{equation}\label{eq:inequality}
d_X(z,x) \leq (1+\epsilon)d_X(z, S).
\end{equation}
Since \(S\) is closed and \(T\) is finite, such a set \(F\) clearly exists. 
We set
\begin{equation*}
E:=\big\{ \{u,v\} : u\neq v \textrm{ with } \left(u,v\in T\right) \textrm{ or } \left(u\in T, v\in F \right) \big\}.
\end{equation*}
Let \(G:=(V,E)\) denote the graph with vertex set \(V:=F\cup T\) and edge set \(E\). We say that a subset  \(E^\prime\subset E\) is \textit{admissible} if the graph \(G^\prime:=(V,E^\prime)\) contains no cycles
and has the property that if  \(v,v^\prime \in F\) are distinct, then there is no path in \(G^\prime\) connecting them. For each edge \(\{u,v\}\in E\) we set \(\omega(\{u,v\}):=d_X(u,v)\). Furthermore, let \(N\geq 0\) denote the cardinality of \(E\). 
Let \(e\colon \{1, \ldots, N\}\to E\) be a bijective map such that the composition \(\omega\circ e\) is a non-decreasing function. We construct the sequence \(\{E_\ell\}_{\ell=0}^{N}\) of subsets of \(E\) via the following recursive rule:
\begin{equation}\label{Eq:ConstructionP}
E_0\coloneqq\varnothing, \quad
E_\ell\coloneqq
\begin{cases}
\{e(\ell)\}\cup E_{\ell-1} & \textrm{ if  } \{e(\ell)\}\cup E_{\ell-1} \textrm{  is admissible} \\
E_{\ell-1} & \textrm{ otherwise}.
\end{cases}
\end{equation}
We claim that for each point \(z\in T\) there exists an integer \(L_z\geq 1\) and a unique injective path \(\gamma_z\colon \{1, \ldots, L_z\}\to E_N\) connecting \(z\) to a point \(x_z\) in \(F\). Indeed, the uniqueness part of the claim follows directly, as \(E_N\) is admissible.
Now, we show the existence part. Let \(z\in T\) be a point. Choose an arbitrary point \(x\in F\). If the edge \(\{x,z\}\) is contained in \(E_N\), then an injective path \(\gamma_z\) with the desired property surely exists. Suppose now that \(\{x,z\}\notin E_N\). It follows from the recursive construction of \(E_N\) that in this case there either exists a path in \(E_N\) from \(z\) to \(x\) of length greater than or equal to two or there exists a path in \(E_N\) from \(z\) to a point \(x^\prime\in F\) distinct from \(x\). Thus, in any case an injective path \(\gamma_z\) with the desired properties exists. We define the map \(F_\epsilon\colon S\cup T \to Y\) as follows
\begin{equation*}
\begin{split}
&F_\epsilon(x):=f(x) \quad\quad\quad\,\,\,\,\,\textrm{ for all } x\in S  \\
&F_\epsilon (z):=f(x_z) \quad\quad\quad\,\,\textrm{ for all } z\in T. 
\end{split}
\end{equation*}
In other words, \(F_\epsilon=f\circ R_\epsilon\), where \(R_\epsilon\colon S\cup T\to S\) is the retraction that maps \(z\in T\) to \(x_z\in S\). 
In what follows, we show that \(R_\epsilon\) has Lipschitz constant smaller than or equal to  \((1+\epsilon)m+1\). This is the reason that enables us to put so low requirements onto ‘distance’ in Y. Now, let \(z\in T\) and \(x\in S\) be points. By the use of the triangle inequality, we compute
\begin{equation}\label{eq:sillyestimate}
\begin{split}
&\rho_Y(F_{\epsilon}(x), F_{\epsilon}(z))=\rho_Y(f(x), f(x_z))\leq \Lip(f)d_X(x,x_z) \\
&\leq \Lip(f)\left(d_X(x,z)+ \sum_{\ell=1}^{L_z} \omega(\gamma_z(\ell))\right).\\
\end{split}
\end{equation}
Let \(x^\prime \in F\) be a point such that the pair  \((z, x^\prime)\) satisfies the estimate \eqref{eq:inequality}. By the recursive construction of \(E_N\), it follows that \(\omega(\gamma_z(\ell)) \leq d(x^\prime, z)\) for all \(\ell \in \{1, \ldots, L_z\}\), since the function \(\omega\circ e\) is non-decreasing. Hence, by the use of \eqref{eq:sillyestimate} we obtain
\begin{equation*}
\begin{split}
&\rho_Y(F_{\epsilon}(x), F_{\epsilon}(z)) \\
&\leq  \Lip(f)\left(d_X(x, z)+ L_z d_X(x^\prime, z)\right)\\
&\leq \Lip(f)\left(1+L_z(1+\epsilon)\right)d_X(x, z)\\
&\leq \Lip(f)\left( (1+\epsilon)m+1\right)d_X(x, z).
\end{split}
\end{equation*}
Now, let \(z,z^\prime \in T\) be points. If \(x_z=x_{z^\prime}\), then \(F_\epsilon(z)=F_\epsilon(z^\prime)\), by construction. 
Suppose now that \(x_z\neq x_{z^\prime}\) . We compute
\begin{equation}\label{eq:rushh}
\begin{split}
&\rho_Y(F_{\epsilon}(z), F_{\epsilon}(z^\prime))=\rho_Y(f(x_z), f(x_{z^\prime}))\leq \Lip(f)d_X(x_z,x_{z^\prime}) \\
&\leq \Lip(f)\left(\sum_{\ell=1}^{L_z} \omega(\gamma_z(\ell)) +d_X(z, z^\prime)+\sum_{\ell=1}^{L_{z^\prime}} \omega(\gamma_{z^\prime}(\ell))\right). 
\end{split}
\end{equation}
The edge \(\{z, z^\prime\}\) is not contained in \(E_N\); thus, by the recursive construction of \(E_N\) we obtain that \(\omega(\gamma_z(\ell)) \leq \omega(\{z, z^\prime\})\) for all \(\ell\in \{1, \ldots, L_z\}\) and \(\omega(\gamma_{z^\prime}(\ell)) \leq \omega(\{z, z^\prime\})\) for all for all \(\ell\in \{1, \ldots, L_{z^\prime}\}\). By virtue of \eqref{eq:rushh} we deduce
\begin{equation*}
\begin{split}
&\rho_Y(F_{\epsilon}(z), F_{\epsilon}(z^\prime))\\
&\leq \Lip(f)\left( L_z+1+L_{z^\prime} \right) d_X(z,z^\prime) \\
&\leq \Lip(f)(m+1)d_X(z,z^\prime). 
\end{split}
\end{equation*}
The last inequality follows, since \(E_N\) is admissible and the paths \(\gamma_z, \gamma_{z^\prime}\) are injective; thus, \(L_z+L_{z^\prime} \leq m\).
We have considered all possible cases and we have established that
\begin{equation*}
\Lip(F_{\epsilon})\leq \left((1+\epsilon)m+1\right)\Lip(f),
\end{equation*}
as desired. This completes the proof.  
\end{proof}


\section{One point extensions of Banach space valued maps}\label{sec:sharpness}
The collection of examples that we construct in this section is inspired by \cite{grunbaum1960projection}. We define the sequence \(\{W_k\}_{k\geq 0}\) of matrices via the recursive rule
\begin{equation*}
\begin{split}
&W_0:=1, \\
&W_{k+1}:=\begin{pmatrix}
W_k & W_k \\
W_k & -W_k
\end{pmatrix}. \\
\end{split}
\end{equation*}
The matrices \(W_k\) are commonly known as \textit{Walsh matrices}. 
For each integer \(k\geq 1\) let \(W_k^{\hspace{0.08em}\prime}\) denote the \((2^k-1)\times 2^k\) matrix that is obtained from \(W_k\) by deleting the first row of \(W_k\). Further, for each integer \(k\geq 1\) and each integer \(\ell \in \{1, \ldots, 2^k\}\)  we set
\begin{equation}\label{eq:vertices}
v_\ell^{(k)}\coloneqq \ell\textrm{-th column of the matrix } W_k^{\hspace{0.08em}\prime}.  
\end{equation} 
By construction, \(v_\ell^{(k)}\in \R^{2^k-1}\) for all \(k\geq 1\) and \(\ell \in \{1, \ldots, 2^k\}\). Clearly, \(v_\ell^{(k)}\in \ell_{p}\) for all \(p\in [1, +\infty]\) via the canonical embedding. The goal of this section is to prove the following proposition.

\begin{proposition}\label{prop:lower}
Let \(p\in[1, +\infty]\) be an element of the extended real numbers and let \(k\geq 1\) be an integer. If \(F\colon \left(\{v_1^{(k)} , \ldots , v_{2^k}^{(k)}\}\cup\{0\},\norm{\cdot}_p\right)\to \left(\ell_1, \norm{\cdot}_1\right)\) is a Lipschitz extension of the function 
\begin{equation*}
\begin{split}
&f\colon \left(\{v_1^{(k)} , \ldots , v_{2^k}^{(k)}\}, \norm{\cdot}_p\right) \to \left(\ell_1, \norm{\cdot}_1\right)\\
& v_\ell^{(k)}\mapsto v_\ell^{(k)},
\end{split}
\end{equation*}
then it holds that
\begin{equation*}
\Lip(F) \geq \left(2-\frac{1}{2^{k-1}}\right)^{\frac{1}{p_\star}} \Lip(f) ,
\end{equation*}
where \(1/p_\star:=1-1/p\) if \(p\neq +\infty\) and \(1/p_\star:=1\) otherwise. 
\end{proposition}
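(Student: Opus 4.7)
\begin{prooof}
The plan is to exploit the combinatorial structure of the Walsh matrices through a straightforward averaging argument over the \(2^k\) vertex vectors.

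First I would record the two crucial features of \(W_k\): its first row is the all-ones row (by induction on the block recursion), and its rows are pairwise orthogonal with entries in \(\{-1,+1\}\). These together force each row of \(W_k^{\hspace{0.08em}\prime}\) to contain exactly \(2^{k-1}\) entries equal to \(+1\) and \(2^{k-1}\) equal to \(-1\), and they force any two distinct columns of \(W_k\) to disagree in exactly \(2^{k-1}\) positions, none of which is the first coordinate. Translating this to the vectors \(v_\ell^{(k)}\in\R^{2^k-1}\), for every \(p\in[1,+\infty]\) and every \(\ell\neq m\) one obtains
\[\|v_\ell^{(k)}\|_p=(2^k-1)^{1/p},\qquad \|v_\ell^{(k)}-v_m^{(k)}\|_p=2\,(2^{k-1})^{1/p},\]
with the convention \((\,\cdot\,)^{1/\infty}=1\). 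Since \(f\) is the inclusion, its Lipschitz constant is therefore the ratio
\[\Lip(f)=\frac{2^k}{2\,(2^{k-1})^{1/p}}=(2^{k-1})^{1/p_\star}.\]

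Next I would bound \(\Lip(F)\) from below. Set \(w:=F(0)\in\ell_1\). Replacing \(w\) by its restriction to the first \(2^k-1\) coordinates only decreases the distances \(\|v_\ell^{(k)}-w\|_1\), so I may assume \(w\in\R^{2^k-1}\). The heart of the argument is the averaging identity
\[\sum_{\ell=1}^{2^k}\|v_\ell^{(k)}-w\|_1=2^{k-1}\sum_{i=1}^{2^k-1}\bigl(|1-w_i|+|1+w_i|\bigr)\geq 2^{k-1}\cdot 2\,(2^k-1),\]
which uses the \(\pm 1\)-balance of each row of \(W_k^{\hspace{0.08em}\prime}\) together with the elementary bound \(|1-w_i|+|1+w_i|\geq 2\). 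By pigeonhole there is some index \(\ell_0\) with \(\|v_{\ell_0}^{(k)}-w\|_1\geq 2^k-1\), so the Lipschitz condition applied to the pair \((0,v_{\ell_0}^{(k)})\) gives
\[\Lip(F)\geq\frac{2^k-1}{(2^k-1)^{1/p}}=(2^k-1)^{1/p_\star}.\]
Dividing by the formula for \(\Lip(f)\) above produces the asserted bound \(\bigl(2-2^{1-k}\bigr)^{1/p_\star}\).

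I do not foresee a serious obstacle: the whole argument reduces to the one-line averaging identity above, itself forced by the perfect balance of non-trivial Walsh rows. The endpoints \(p=1\) (where the estimate degenerates to the trivial \(1\)) and \(p=+\infty\) (covered by the stated conventions) require no separate treatment.
\end{prooof}
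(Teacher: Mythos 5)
Your argument is correct and rests on the same core computation as the paper's: the identity $\sum_{\ell}\|v_\ell^{(k)}-w\|_1=2^{k-1}\sum_{r}\bigl(|1-w_r|+|-1-w_r|\bigr)\geq 2^k(2^k-1)$ coming from the $\pm1$-balance of the nontrivial Walsh rows, combined with the exact values of $\|v_\ell^{(k)}\|_p$ and $\|v_\ell^{(k)}-v_m^{(k)}\|_p$. The only difference is cosmetic: the paper packages this identity inside a stronger rigidity lemma (forcing $w=0$ under the reverse inequalities) and takes its contrapositive, whereas you extract the needed index $\ell_0$ directly by pigeonhole, which is leaner and equally valid.
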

Note that Proposition \ref{prop:lower} implies in particular that \(e(\ell_2, \ell_1) \geq \sqrt{2}\). 
The key component in the proof of Proposition \ref{prop:lower} is the following geometric lemma. 

\begin{lemma}\label{Lem:Lower}

Let \(k\geq 1\) be an integer and suppose that \(w\in \R^{2^k-1}\) is a vector such that
\begin{equation}\label{Eq:Inequality}
\norm{v_\ell^{(k)}-w}_1\leq \norm{v_\ell^{(k)}}_1 \textrm{ for all } \ell\in \{ 1, \ldots, 2^k\},
\end{equation}
then it holds that \(w=0\). 
\end{lemma}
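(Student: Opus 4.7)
The plan is to sum the hypothesized inequalities over \(\ell\) and exploit the Hadamard structure of the Walsh matrices. First I would record the relevant structural facts: the entries of \(W_k\) lie in \(\{-1,+1\}\), so every column satisfies \(\norm{v_\ell^{(k)}}_1 = 2^k - 1 =: n\). Moreover, a direct induction shows that the first row of \(W_k\) equals \((1,\ldots,1)\), and the Hadamard identity \(W_k W_k^{T} = 2^k I\) forces every other row to be orthogonal to this all-ones row. Consequently, within each fixed coordinate \(i \in \{1,\ldots,n\}\), exactly \(2^{k-1}\) of the scalars \(v_{1,i}^{(k)},\ldots,v_{2^k,i}^{(k)}\) equal \(+1\) and the remaining \(2^{k-1}\) equal \(-1\).

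With this balancing in hand, I would sum the assumption \eqref{Eq:Inequality} over \(\ell\), interchange the order of summation, and observe that
\[
\sum_{\ell=1}^{2^k}\norm{v_\ell^{(k)} - w}_1 \;=\; \sum_{i=1}^{n} 2^{k-1}\bigl(|1-w_i|+|1+w_i|\bigr) \;\geq\; n\cdot 2^k,
\]
whereas the hypothesis gives the upper bound \(n\cdot 2^k\). Equality throughout yields two separate pieces of information: \(|w_i|\leq 1\) for every \(i\) (the only situation in which \(|1-w_i|+|1+w_i|=2\)), and, simultaneously, each individual inequality \(\norm{v_\ell^{(k)} - w}_1 \leq n\) is in fact an equality.

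Once \(|w_i|\leq 1\) is secured, the absolute values collapse: for every coordinate one has \(|v_{\ell,i}^{(k)} - w_i| = 1 - v_{\ell,i}^{(k)}\,w_i\), because \(v_{\ell,i}^{(k)}\in\{-1,+1\}\). Summing over \(i\) rewrites the norm as \(\norm{v_\ell^{(k)} - w}_1 = n - \langle v_\ell^{(k)}, w\rangle\), and the equality obtained in the previous step then reads \(\langle v_\ell^{(k)}, w\rangle = 0\) for every \(\ell \in \{1,\ldots,2^k\}\). Since \(W_k\) is invertible, the matrix \(W_k^{\hspace{0.08em}\prime}\) has rank \(n\), so its columns \(v_1^{(k)},\ldots,v_{2^k}^{(k)}\) span \(\R^n\). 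Therefore \(w\) is orthogonal to a spanning set of \(\R^n\), and we conclude that \(w=0\).

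The only real difficulty I anticipate is spotting the averaging trick of summing the inequalities over all \(\ell\); without this step there is no visible way to link the \(2^k\) hypothesized bounds into a single coordinate-wise statement. After the Hadamard balancing of signs is invoked, the remaining manipulations (identifying the forced equality cases of the triangle inequality and invoking invertibility of \(W_k\)) are essentially bookkeeping.
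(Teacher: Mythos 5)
Your proof is correct. The first half coincides with the paper's argument: you sum the hypothesis over \(\ell\), use the fact that in each fixed coordinate exactly \(2^{k-1}\) of the entries \(v_{\ell,i}^{(k)}\) equal \(+1\) and \(2^{k-1}\) equal \(-1\) (equivalently, \(\sum_\ell v_\ell^{(k)}=0\)), and deduce from \(\abs{1-x}+\abs{-1-x}\geq 2\) that every inequality in \eqref{Eq:Inequality} is in fact an equality. Where you diverge is the endgame. The paper, having only the equalities \(\norm{v_\ell^{(k)}-w}_1=\norm{v_\ell^{(k)}}_1\), argues via convexity of \(t\mapsto\norm{v_\ell^{(k)}-tw}_1\) on \([0,1]\) and a local supporting-hyperplane description of the \(\ell_1\)-sphere near each vertex \(v_\ell^{(k)}\) to conclude \(\langle w, v_\ell^{(k)}\rangle=0\) for all \(\ell\). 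You instead extract the additional coordinate-wise information \(\abs{w_i}\leq 1\) from the equality case of \(\abs{1-w_i}+\abs{1+w_i}\geq 2\), which lets you linearize the norm outright, \(\norm{v_\ell^{(k)}-w}_1=(2^k-1)-\langle v_\ell^{(k)},w\rangle\), and read off the same orthogonality relations directly. Both arguments finish by noting that the columns of \(W_k^{\hspace{0.08em}\prime}\) span \(\R^{2^k-1}\). Your version is more elementary and avoids the \(\epsilon\)--\(t\) geometric detour entirely; the paper's convexity argument is more robust in the sense that it would survive without the exact \(\pm 1\) structure of the coordinates, but that generality is not needed here.
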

\begin{proof}
By the use of a simple induction it is straightforward to show that 
\begin{equation}\label{eq:sumToZero}
\sum_{\ell=1}^{2^k} v_\ell^{(k)}=0.
\end{equation}
Moreover, since \(v_\ell^{(k)}\) is a \(\pm 1\) vector, inequality \eqref{Eq:Inequality} implies that
\[\langle w, v_\ell^{(k)}\rangle_{_{\R^{2^k-1}}}\leq 0.\]
Equality \eqref{eq:sumToZero} implies that none of these inequalities can be strict;
thus, as (for instance) the vectors \(v_2^{(k)}, \ldots, v_{2^k}^{(k)}\) form a basis of \(\R^{2^k-1}\), we obtain \(w=0\), as desired. 
\end{proof} 
Having Lemma \ref{Lem:Lower} at our disposal, Proposition \ref{prop:lower} can readily be verified.
\begin{proof}[Proof of Proposition \ref{prop:lower}]
To begin, we compute \(\Lip(f)\). We claim that 
\begin{equation}\label{eq:Lip}
\Lip(f)=\left(2^{k-1}\right)^{\frac{1}{p_\star}}.
\end{equation} 
First, suppose that \(p\in [1, +\infty)\). A simple induction implies that two distinct columns of \(W_k\) are orthogonal to each other. Since the entries of \(W_k\) consist only of plus and minus one, we obtain that
\begin{equation*}
\norm{ v_i^{(k)}-v_j^{(k)}}_p^p=2^p\card\left(\left\{ \ell\in\{1, \ldots, 2^{k}-1\} : (v_i^{(k)})_\ell\neq (v_j^{(k)})_\ell \right\}\right)=2^p2^{k-1},
\end{equation*}
where we use \(\card(\cdot)\) to denote the cardinality of a set.
Hence, if \(p\in [1, +\infty)\), then the identity \eqref{eq:Lip} follows. Since the \(p\)-norms \(\norm{\cdot}_p\) converge pointwise to the maximum norm \(\norm{\cdot}_\infty\) if \(p\to +\infty\), the identity \eqref{eq:Lip} follows also in the case \(p=+\infty\), as was left to show. By considering the contraposition of the statement in Lemma \ref{Lem:Lower}, we may deduce that there is an index \(\ell\in \{1, \ldots, 2^k\}\) such that 
\begin{equation*}
\norm{v_\ell^{(k)}-F(0)}_1 \geq \norm{v_\ell^{(k)}}_1.
\end{equation*}
As a result, we obtain that
\begin{equation*}
\Lip(F)\geq \frac{\norm{v_\ell^{(k)}-F(0)}_1}{\norm{v_\ell^{(k)}}_p}\geq \frac{\norm{v_\ell^{(k)}}_1}{\norm{v_\ell^{(k)}}_p}=(2^k-1)^{\frac{1}{p_\star}}.
\end{equation*}
Hence, it follows that
\begin{equation*}
\frac{\Lip(F)}{\Lip(f)}\geq \frac{(2^k-1)^{\frac{1}{p_\star}}}{(2^{k-1})^{\frac{1}{p_\star}}}=\left(2-\frac{1}{2^{k-1}}\right)^{\frac{1}{p_\star}};
\end{equation*}
as desired. 
\end{proof}

We conclude this section with the proof of Corollary \ref{cor:snowflake}. 

\begin{proof}[Proof of Corollary \ref{cor:snowflake}]
Let \(k\geq 1\) be an integer and let \[g_F\colon \left(\{v_1^{(k)} , \ldots , v_{2^k}^{(k)}\}, F\circ \norm{\cdot}_p\right) \to \left(\ell_1, \norm{\cdot}_1\right)\] denote the map such that \(v_i^{(k)}\mapsto v_i^{(k)}\). The vectors \(v_i^{(k)}\) are given as in \eqref{eq:vertices} and interpreted as elements of \(\ell_p\) via the canonical embedding. It is readily verified that 
\begin{equation*}
\Lip \left( g_{F} \right)= \frac{A}{F(A)} \Lip \left( g_{\id}\right),
\end{equation*}
where \(A:=\norm{ v_i^{(k)}-v_j^{(k)}}_p\). 
Now, let \(\delta >0\)  be a real number. Using the assumptions in Corollary \ref{cor:snowflake} and Theorem \ref{hopefullySolved} (for the map \(F=\id\)) it follows that there is a map
\(G_F\colon \left(\{v_1^{(k)} , \ldots , v_{2^k}^{(k)}\}\cup\{0\},F\circ\norm{\cdot}_p \right)\to \left(\ell_1, \norm{\cdot}_1\right)\) that extends \(g_F\)
such that 
\begin{equation*}
\Lip\left(G_F\right) \leq (1+\delta)\, 2^\epsilon \,\sqrt{2} \,\Lip\left(g_F\right).
\end{equation*}
We define the map \(T\colon  \left(\{v_1^{(k)} , \ldots , v_{2^k}^{(k)}\}\cup\{0\},\norm{\cdot}_p \right)\to \left(\ell_1, \norm{\cdot}_1\right)\) via \(x\mapsto G_F(x)\). 
We calculate
\begin{equation*}
\Lip(T)\leq (1+\delta)\, 2^\epsilon \,\sqrt{2} \, \max\left\{\frac{F(A)}{A}, \frac{F(B)}{B}\right\} \Lip\left(g_F\right),
\end{equation*}
where \(B:=\norm{ v_i^{(k)}-0}_p\).
Since the map \(T\) is a Lipschitz extension of \(g_{\id}\), Proposition \ref{prop:lower} tells us that
\begin{equation*}
\Lip(T) \geq \left(2-\frac{1}{2^{k-1}}\right)^{\frac{1}{q}} \Lip\left(g_{\id}\right)=\frac{A}{B}  \left(1-\frac{1}{2^{k}}\right)\Lip\left(g_{\id}\right),
\end{equation*}
where  \(1/q:=1-1/p\) if \(p\neq +\infty\) and \(1/q:=1\) otherwise. 
We set \(\gamma:=\frac{A}{B}\). Thus, by putting everything together and via a simple scaling argument, we obtain for all \(x >0\)
\begin{equation*}
\gamma  \left(1-\frac{1}{2^{k}}\right) \frac{ F(\gamma x) }{\gamma x} \leq (1+\delta) 2^\epsilon \sqrt{2}\, \max\left\{\frac{F(x)}{x},\frac{F(\gamma x)}{\gamma x}\right\}.
\end{equation*}
 Thus, since 
\[\sup\limits_{x>0} \frac{F(x)}{x} <+\infty \]
we obtain 
\[\frac{\sqrt[q]{2}}{\sqrt[p]{1-\frac{1}{2^k}}} \left(1-\frac{1}{2^{k}}\right)=\gamma  \left(1-\frac{1}{2^{k}}\right) \leq  (1+\delta) 2^\epsilon \sqrt{2}.\]
Consequently, as \(k\geq 1\) and \(\delta >0\) are arbitrary, we deduce \(p \leq \left( \frac{1}{2}-\epsilon \right)^{-1}\). This completes the proof.
\end{proof}

\section{Minimum value of a certain quadratic form in Hilbert space}\label{sec:quadraticForm}
Let \((H, \langle \cdot, \cdot \rangle_{_H})\) be a Hilbert space, let \(I\) denote a finite set and let \(\mathbf{x}\colon I \to H\) be a map. Suppose that \(\boldsymbol\lambda\colon I \times I \to \R\) is a symmetric, non-negative function. Further, assume that \(G\colon [0,+\infty)\to [0,+\infty)\) is a convex, non-decreasing function with \(G(0)=0\). 
We define
\[\Phi(\mathbf{x}, \boldsymbol{\lambda}, G):=\sum_{(k, \ell) \in I\times I} \boldsymbol\lambda\big(k, \ell\big) \,G\big(\norm{\mathbf{x}(k)-\mathbf{x}(\ell)}_{_H}^2\big)\]
and for each subset \(J\subset I\) we set
\[\mathsf{m}(\mathbf{x}, \boldsymbol{\lambda}, G, J):=\inf\big\{\Phi(\mathbf{z}, \boldsymbol{\lambda}, G) \,:\, \mathbf{z}\colon I\to H \textrm{ is a map with } \mathbf{z}|_{J^c}=\mathbf{x}|_{J^c} \big\}.\]
The remainder of this section is devoted to calculate the quantity \(\mathsf{m}(\mathbf{x}, \boldsymbol{\lambda}, \id, J)\). Let \(J\subset  I\) be a proper subset. We may suppose that \(J=\big\{1, \ldots, m\big\}\), where \(m:=\card(J)\). To ease notation, we set \(\lambda_{k\ell}:=\bm{\lambda}(k,\ell)\) and  we define the matrix 
\begin{equation}\label{eq:MMatrix}
 M(\bm{\lambda}, J):=
\begin{bmatrix}
  \sum\limits_{k\in J^c} \lambda_{1k}+\sum\limits_{j=1}^m \lambda_{1j}     & -\lambda_{12} & \dots & -\lambda_{1m} \\
    -\lambda_{21}       & \sum\limits_{k\in J^c} \lambda_{2k}+\sum\limits_{j=1}^m \lambda_{2j} &  \dots & -\lambda_{2m} \\
    \vdots & \vdots & \ddots & \vdots \\
    -\lambda_{m1}       & -\lambda_{m2} &  \dots & \sum\limits_{k\in J^c} \lambda_{mk}+\sum\limits_{j=1}^m \lambda_{mj}
\end{bmatrix}.
\end{equation}
The matrices \(M(\bm{\lambda}, J)\) appear naturally in the proof of Theorem \ref{hopefullySolved}. If the symmetric matrix \(M:=M(\bm{\lambda}, J)\) is strictly diagonally dominant, that is, for each integer \(1 \leq i \leq m\), it holds
\begin{equation*}
\abs{m_{ii}} > \sum_{j\neq i}^m \abs{m_{ij}},
\end{equation*}
it follows via Gershgorin's circle theorem that \(M\) is positive definite.
As a result, the matrix \(M(\bm{\lambda}, J)\) is non-singular if  \[\sum\limits_{k\in J^c} \lambda_{ik} >0 \quad \textrm{ for all } 1 \leq i \leq m.\] 
Next, we deduce the minimum value of \(\mathsf{m}(\mathbf{x}, \boldsymbol{\lambda}, \id, J)\). 
\begin{proposition}\label{prop:quadratic}
Let \((H, \langle \cdot, \cdot \rangle_{_H})\) be a Hilbert space, let \(I\) be a finite set and let \(\mathbf{x}\colon I \to H\) be a map. Suppose that \(\boldsymbol\lambda\colon I \times I \to \R\) is a symmetric, non-negative function and let \(J\subset I\) be a proper subset. If the matrix \(M:=M(\bm{\lambda}, J)\) given by \eqref{eq:MMatrix} is strictly diagonally dominant and \(\lambda_{k\ell}=0\) for all  \(k, \ell \in J^c\), then 
\begin{equation}\label{eq:weekend}
\begin{split}
\mathsf{m}(\mathbf{x}, \boldsymbol{\lambda}, \id, J)=\sum_{i\in J }\sum_{j\in J} \sum_{k\in J^c} \sum_{\ell\in J^c } \lambda_{ik} c_{ij} \lambda_{j\ell} \norm{\mathbf{x}(k)-\mathbf{x}(\ell)}^{2}_{_H} \\
\end{split}
\end{equation}
where \(C:=M^{-1}\). Moreover,
\begin{equation}\label{eq:SumToOne}
\sum_{j=1}^{\abs{J}} c_{ij}\sum\limits_{k\in J^c} \lambda_{jk}=1
\end{equation}
for all integers \(1 \leq i \leq \abs{J}\). 
\end{proposition}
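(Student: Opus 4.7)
\emph{Plan.} The plan is to treat $\Phi(\mathbf{z},\boldsymbol\lambda,\mathsf{id})$ as a quadratic form on the product Hilbert space $H^m$ (with $m:=\card(J)$) in the free variables $y_i:=\mathbf{z}(i)$, $i\in J=\{1,\ldots,m\}$, and to solve the associated normal equations in closed form. Since $\lambda_{k\ell}=0$ for $k,\ell\in J^c$ and the diagonal entries $\lambda_{ii}$ contribute $\lambda_{ii}\norm{y_i-y_i}_{_H}^2=0$ to $\Phi$, we may harmlessly assume $\lambda_{ii}=0$; with this convention,
\[
\Phi=\sum_{i,j\in J}\lambda_{ij}\norm{y_i-y_j}_{_H}^2+2\sum_{i\in J,\,k\in J^c}\lambda_{ik}\norm{y_i-\mathbf{x}(k)}_{_H}^2.
\]
Expanding the squared norms by polarization and collecting terms rewrites this, with $b_i:=\sum_{k\in J^c}\lambda_{ik}\mathbf{x}(k)$ and $C_0:=2\sum_{i\in J,\,k\in J^c}\lambda_{ik}\norm{\mathbf{x}(k)}_{_H}^2$, as
\[
\Phi=2\sum_{i,j\in J}m_{ij}\langle y_i,y_j\rangle_{_H}-4\sum_{i\in J}\langle y_i,b_i\rangle_{_H}+C_0.
\]

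Since $M$ is symmetric and strictly diagonally dominant with positive diagonal, Gershgorin's theorem implies $M$ is positive definite, so $C:=M^{-1}$ exists. Setting the $H^m$-valued gradient of $\Phi$ to zero gives the unique minimizer $y_i^{\ast}=\sum_{j\in J}c_{ij}b_j$. Identity \eqref{eq:SumToOne} follows from a one-line computation: summing the $i$-th row of $M$ gives $\sum_{j\in J}m_{ij}=\sum_{k\in J^c}\lambda_{ik}=:v_i$ (the diagonal and off-diagonal contributions cancel so that only the $J^c$-part survives, which is the M-matrix/Laplacian structure). Equivalently $M\mathbf{1}=v$ in $\R^m$, and multiplying by $C$ yields $\mathbf{1}=Cv$, which is exactly \eqref{eq:SumToOne}.

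Plugging $y^{\ast}=Cb$ back into the quadratic form leaves
\[
\mathsf{m}(\mathbf{x},\boldsymbol\lambda,\mathsf{id},J)=C_0-2\langle b,Cb\rangle_{H^m}.
\]
Expanding $\langle b,Cb\rangle_{H^m}=\sum_{i,j\in J}\sum_{k,\ell\in J^c}\lambda_{ik}c_{ij}\lambda_{j\ell}\langle\mathbf{x}(k),\mathbf{x}(\ell)\rangle_{_H}$, and using \eqref{eq:SumToOne} (together with its transpose, which holds because $C$ is symmetric), one rewrites
\[
\sum_{i\in J,\,k\in J^c}\lambda_{ik}\norm{\mathbf{x}(k)}_{_H}^2=\sum_{i,j\in J}\sum_{k,\ell\in J^c}\lambda_{ik}c_{ij}\lambda_{j\ell}\norm{\mathbf{x}(k)}_{_H}^2,
\]
and analogously with $\norm{\mathbf{x}(\ell)}_{_H}^2$. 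Combining the three contributions via $\norm{a-b}^2=\norm{a}^2+\norm{b}^2-2\langle a,b\rangle$ produces the symmetric quadruple sum \eqref{eq:weekend}.

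The main technical obstacle is small but essential: identity \eqref{eq:SumToOne} is precisely what is needed to absorb the constant term $C_0$ into the symmetric squared-distance expression. Without it, $\mathsf{m}$ is naturally expressed in terms of raw inner products $\langle\mathbf{x}(k),\mathbf{x}(\ell)\rangle_{_H}$, which is geometrically opaque and would be of little use for the M-matrix estimates needed for Theorem \ref{hopefullySolved}. Beyond that, the remaining care is purely bookkeeping: keeping track of the factor of $2$ produced by symmetrising the sum over $J\times J$ versus the $J\times J^c$ cross term, and quietly disposing of the irrelevant diagonal entries $\lambda_{ii}$.
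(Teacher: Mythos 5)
Your proof is correct and follows essentially the same route as the paper: minimize the quadratic form in the free variables via the normal equations, derive \eqref{eq:SumToOne} from the row sums $M\mathbf{1}=v$, and use it to absorb the constant term into the symmetric squared-distance expression. The only (cosmetic) difference is that you work coordinate-free on $H^m$, whereas the paper first restricts to the finite-dimensional span of $\{\mathbf{x}(k)\}_{k\in J^c}$ and computes in coordinates.
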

\begin{proof}
We set \(m:=\abs{J}\). We may suppose that \(J=\{1, \ldots, m\}\). Since \(D^{-1}M\bm{j}=\bm{j}\), where \(\bm{j}:=(1, \ldots, 1)\in \R^m\) and \(D:=(d_{ij})_{1 \leq i, j\leq m}\) is a diagonal matrix with
\[d_{ii}:=\sum\limits_{k\in J^c} \lambda_{ik}, \quad \textrm{ for all } 1 \leq i \leq m,\] we obtain
\(CD\bm{j}=\bm{j}\), that is,
\begin{equation}\label{eq:sumofOne}
\sum_{j=1}^m c_{ij}\sum\limits_{k\in J^c} \lambda_{jk}=1
\end{equation} 
for all \(1 \leq i \leq m\). Thus, \eqref{eq:SumToOne} follows. 
Let the map \(\Phi \colon H^m\to \R\) be given by the assignment 
\begin{equation*}
(z_1, \ldots, z_m)\mapsto  \sum_{i=1}^m\sum\limits_{k\in J^c}\lambda_{ik}\norm{z_i-\mathbf{x}(k)}_{_H}^2+\frac{1}{2}\sum_{i=1}^m\sum_{j=1}^m \lambda_{ij} \norm{z_i-z_j}^{2}_{_H}.
\end{equation*}
Note that \[ 2 \inf \Phi=\mathsf{m}(\mathbf{x}, \boldsymbol{\lambda}, G, J).\] Thus, to conclude the proof we calculate the minimum value of the map \(\Phi\).
 Let \(U\subset H\) denote the span of the vectors \(\big(\mathbf{x}(k)\big)_{k\in J^c}\). Clearly, \(\inf \Phi|_U=\inf \Phi\). 
In the following, we compute the minimal value of \(\Phi|_U\). 

 The subset \(U\subset H\) is linearly isometric to \((\R^d, \norm{\cdot}_{_2})\) for some integer \(1 \leq d \leq \card(J^c)\). Consequently, we may
suppose (by abuse of notation) for all \(k\in J^c\) that \(\mathbf{x}(k)\in \R^d\), say \(\mathbf{x}(k)=(x_{k1}, \ldots, x_{kd})\), and that the function \(\Phi|_U\colon (\R^d)^m\to \R \) is given by the assignment
\begin{equation*}
(p_1, \ldots, p_m)\mapsto \sum_{t=1}^d \left(\sum_{i=1}^m\sum_{j=1}^m p_{it}m_{ij} p_{jt}-2\sum_{i=1}^m p_{it}\sum\limits_{k\in J^c} \lambda_{ik}x_{rk}+\sum_{i=1}^m\sum\limits_{k\in J^c} \lambda_{ik}x_{kt}^2\right),
\end{equation*}
where \(p_i:=(p_{i1}, \ldots, p_{id})\) for all integers \(1 \leq i \leq m\). 
Using elementary analysis, one can deduce that the minimum value of \(\Phi|_U\) is equal to
\begin{equation}\label{eq:SophieHunger}
\sum_{t=1}^d \left(-\sum_{i=1}^m \sum_{j=1}^m \sum_{r=1}^n \sum_{s=1}^n  \lambda_{js}c_{ij}\lambda_{ir}x_{st}x_{rt}+\sum_{i=1}^m\sum_{r=1}^n \lambda_{ir}x_{rt}^2\right).
\end{equation}
Thus, via \eqref{eq:SophieHunger} and \eqref{eq:sumofOne} we conclude that the minimum value of \(\Phi\) is equal to
\begin{equation*}
\begin{split}
&\sum_{t=1}^d \left(\sum_{i=1}^m \sum_{j=1}^m \sum\limits_{k\in J^c} \sum\limits_{\ell\in J^c}  \lambda_{j\ell}c_{ij}\lambda_{ik}\left(-x_{\ell t}x_{kt}+x_{kt}^2\right)\right)\\
&=\frac{1}{2} \sum_{i=1}^m \sum_{j=1}^m \sum\limits_{k\in J^c} \sum_{\ell\in J^c}  \lambda_{j\ell}c_{ij}\lambda_{ik}\left(\sum_{t=1}^d (x_{\ell t}-x_{kt})^2 \right)\\
& =\frac{1}{2} \sum_{i=1}^m \sum_{j=1}^m \sum\limits_{k\in J^c} \sum\limits_{\ell\in J^c}  \lambda_{j\ell}c_{ij}\lambda_{ik} \norm{\mathbf{x}(\ell)-\mathbf{x}(k)}^{2}_{_H}, 
\end{split}
\end{equation*}
as claimed. This completes the proof. 
\end{proof}


\section{An inequality involving the entries of an M-matrix and its inverse}\label{sec:betterName}
A matrix \(M\in \Mat(m\times m; \R)\) with non-positive off-diagonal elements is said to be an \textit{M-matrix} if \(M\) is non-singular and each entry of \(M^{-1}\) is non-negative, cf. \cite[Definition 1.1]{markham1972nonnegative}. There are several equivalent definitions of an M-matrix, cf. \cite{fiedler1962matrices}.  M-matrices and their matrix inverses are generally well understood, cf. \cite{poole1974survey, johnson1982inverse} for a survey of the theory. 
A primary example of M-matrices are matrices \(M:=M(\bm{\lambda}, J)\). Indeed, such matrices are strictly diagonally dominant (thus non-singular) and via Gauss elimination it is straightforward to show that each entry of the inverse of \(M(\bm{\lambda}, J)\) is non-negative.It is worth to point out that a matrix \(M\in \Mat(m\times m;\R)\) with non-positive off-diagonal elements is an M-matrix if and only if there are matrices \(W, D \in \Mat(m\times m;\R)\) such that \(W\) is a strictly diagonally dominant M-matrix, \(D\) is a diagonal matrix with positive diagonal elements and \(M=WD\). This is a classical result of Fiedler and Pták, cf. \cite[Theorem 4.3]{fiedler1962matrices}. The following result will play a major role in the proof of Theorem \ref{hopefullySolved}. 

\begin{theorem}\label{thm:estiMate}
Let \(m\geq 2\) and let \(M\in \Mat(m\times m;\R)\) be a symmetric invertible matrix with non-positive off-diagonal elements. We set \(C:=M^{-1}\). If \(M\) is an M-matrix, then 
\begin{equation}\label{eq:estimateWewant}
\frac{1}{2}\sum_{i=1}^m \sum_{j=1}^m \abs{m_{ij}}\abs{c_{ik}c_{j\ell}-c_{jk}c_{i\ell}} \leq (m-1)c_{k\ell}
\end{equation}
for all integers \(1 \leq k,\ell \leq m \) with \(k\neq \ell\).
\end{theorem}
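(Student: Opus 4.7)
The plan is to rewrite the left-hand side via a sorting procedure and then exploit a hidden telescoping identity coming from multiplying $MC = I$ against the two relevant columns of $C$. Set $u := Ce_k$ and $v := Ce_\ell$; both have non-negative entries and satisfy $Mu = e_k$, $Mv = e_\ell$. Since $c_{ik}c_{j\ell} - c_{jk}c_{i\ell} = u_iv_j - u_jv_i$ is antisymmetric in $(i,j)$ while $|m_{ij}|$ is symmetric, grouping the ordered pairs yields
\[
\tfrac{1}{2}\sum_{i,j}|m_{ij}|\,|u_iv_j - u_jv_i| \;=\; \sum_{i \ne j}|m_{ij}|(u_iv_j - u_jv_i)_+.
\]
Assuming first that all $u_i > 0$ (otherwise perturb $M \mapsto M + \epsilon I$ and let $\epsilon \downarrow 0$), relabel the indices so that $r_i := v_i/u_i$ is non-decreasing. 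Writing $\Delta_s := r_{s+1} - r_s \geq 0$ and $L_s := \sum_{i \le s < j}|m_{ij}|u_iu_j$, the positive parts collapse to $\sum_{i<j}|m_{ij}|u_iu_j(r_j - r_i)$, and telescoping $r_j - r_i$ across levels presents the left-hand side as $\sum_{s=1}^{m-1}\Delta_s L_s$.

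The engine of the argument will be the following identity, valid for every subset $S \subseteq \{1,\ldots,m\}$:
\[
\sum_{i \in S,\, j \in S^c}|m_{ij}|(u_iv_j - u_jv_i) \;=\; c_{k\ell}\bigl(\mathbf{1}_{k \in S} - \mathbf{1}_{\ell \in S}\bigr).
\]
To derive it, I multiply the row identity $m_{ii}u_i = \delta_{ik} + \sum_{j \ne i}|m_{ij}|u_j$ by $v_i$ and sum over $i \in S$, and separately multiply the analogous identity $m_{ii}v_i = \delta_{i\ell} + \sum_{j \ne i}|m_{ij}|v_j$ by $u_i$ and sum over $i \in S$; subtracting cancels the $\sum_{i \in S} m_{ii}u_iv_i$ terms, the $S \times S$ piece of the remaining double sum vanishes by antisymmetry of $u_iv_j - u_jv_i$ against the symmetric weight $|m_{ij}|$, and only the stated boundary term survives. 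Choosing $S = \{1,\ldots,s\}$ in the sorted order and using $r_j - r_i \geq \Delta_s$ on every pair with $i \le s < j$ then yields the per-cut bound $\Delta_s L_s \le c_{k\ell}\bigl(\mathbf{1}_{k \le s} - \mathbf{1}_{\ell \le s}\bigr)$.

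To conclude, Cauchy--Schwarz for the positive definite matrix $C$ applied to $e_k, e_\ell$ delivers $c_{k\ell}^2 \le c_{kk}\, c_{\ell\ell}$, hence $r_k \le r_\ell$, so $k$ precedes $\ell$ in the sorted order. Summing the per-cut bound over $s$ telescopes to
\[
\tfrac{1}{2}\sum_{i,j}|m_{ij}|\,|c_{ik}c_{j\ell} - c_{jk}c_{i\ell}| \;\le\; c_{k\ell}\,\bigl(\mathrm{pos}(\ell) - \mathrm{pos}(k)\bigr) \;\le\; (m-1)\,c_{k\ell}.
\]
I expect the main obstacle to be spotting the identity above: \emph{a priori} there is no reason the weighted $\ell_1$-norm of the $2\times 2$ minors across a cut of the graph of $M$ should collapse to a single entry of $C$, yet once one writes out the two sums that come from $Mu=e_k$ and $Mv=e_\ell$ and observes the diagonal cancellation, it drops out. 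The sharpness of the inequality for tridiagonal M-matrices, where the sorted order coincides with the natural one and each per-cut bound is an equality, is strong evidence that this sorting-and-cutting strategy captures the right mechanism.
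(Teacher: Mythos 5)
Your strategy is sound and genuinely different from the paper's. The paper's proof of Theorem \ref{thm:estiMate} rests on three ingredients: the row-sum identities of Lemma \ref{lem:zeroRowSum}, a density argument (Lemma \ref{lem:martin}) reducing to generic non-negative matrices, and an inductive combinatorial lemma (Lemma \ref{lem:signPattern}) showing that the rows of the skew-symmetric minor matrix $(c_{ik}c_{j\ell}-c_{jk}c_{i\ell})_{ij}$ carry a total order, with the extremal rows identified via Markham's theorem on almost-principal minors. You obtain that total order for free: with $u=Ce_k$, $v=Ce_\ell$, the sign of $u_iv_j-u_jv_i$ is governed by the ratios $r_i=v_i/u_i$, so sorting by $r_i$ linearly orders the rows at once and Lemma \ref{lem:signPattern} becomes unnecessary. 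Your cut identity is exactly the subset version of Lemma \ref{lem:zeroRowSum} (sum the single-row identities over $i\in S$; the $S\times S$ block cancels by antisymmetry against the symmetric weight $\abs{m_{ij}}$), and Cauchy--Schwarz for the positive definite $C$ (a symmetric non-singular M-matrix is positive definite) replaces Markham's theorem in placing $k$ before $\ell$. The telescoping even yields the slightly sharper bound $\bigl(\mathrm{pos}(\ell)-\mathrm{pos}(k)\bigr)c_{k\ell}$ and makes transparent why the tridiagonal example is extremal.

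One detail needs repair: the perturbation $M\mapsto M+\epsilon I$ does not guarantee $u_i>0$. If $M$ is block diagonal (e.g.\ $M=I$), so is $M+\epsilon I$, and its inverse retains zero entries. Perturb the off-diagonal entries instead, e.g.\ $M_\epsilon:=M-\epsilon(J-I)$ with $J$ the all-ones matrix: for small $\epsilon>0$ this is a positive definite, irreducible matrix with strictly negative off-diagonal entries, hence its inverse is entrywise strictly positive, and both sides of \eqref{eq:estimateWewant} pass to the limit as $\epsilon\downarrow 0$. (Alternatively, reuse the paper's Lemma \ref{lem:martin}.) With that substitution your argument is complete.
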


The estimate in Theorem \ref{thm:estiMate} is sharp. This is the content of the following example.
\begin{example}
Let \(m\geq 2\) be an integer and let \(M\in \Mat(m\times m;\R)\) be the tridiagonal matrix given by
\begin{equation*}
m_{ij}:=
\begin{cases}
3 & \textrm{if } i=j \\
-1& \textrm{if } i=j-1 \\
-1& \textrm{if } i=j+1 \\
0 & \textrm{otherwise}.
\end{cases}
\end{equation*}
Clearly, \(M\) is a symmetric M-matrix. As usual, we set \(C:=M^{-1}\). Since \(\det{(M)} C=\textrm{\normalfont adj}(M)\), where \(\textrm{\normalfont adj}(M)\) is the adjugate matrix of \(M\), it follows
\begin{equation}\label{eq:firsteq}
c_{1m}=\frac{1}{\det{M}}.
\end{equation}
Furthermore, via Jacobi's equality \cite{Jacobi1841}, see \eqref{eq:Jacobi}, we get
\begin{equation}\label{eq:secondeq}
\abs{c_{i1}c_{jm}-c_{j1}c_{im}}=\frac{\abs{\det{M\big[[m]\setminus{\{1,m\}},[m]\setminus{\{i,i+1\}}\big]}}}{\det{M}}=\frac{1}{\det{M}}
\end{equation}
for all pairs of integers \((i,j)\) with \(i=j-1\). By virtue of \eqref{eq:firsteq} and \eqref{eq:secondeq} we obtain
\begin{equation*}
\frac{1}{2}\sum_{i=1}^m \sum_{j=1}^m \abs{m_{ij}\left(c_{i1}c_{jm}-c_{j1}c_{im}\right)}=\frac{m-1}{\det{M}}=(m-1)c_{1m}. 
\end{equation*} 
Consequently, the estimate \eqref{eq:estimateWewant} is best possible. 
\end{example}

This section is structured as follows. To begin, we gather some information 
that is needed to prove Theorem \ref{thm:estiMate}. 
At the end of the section, we establish Theorem \ref{thm:estiMate}.

We start with a lemma that calculates the sum in \eqref{eq:estimateWewant} if the absolute values from the \(2\times 2\)-minors are removed. 
\begin{lemma}\label{lem:zeroRowSum}
Let \(m\geq 2\) and let \(M\in \Mat(m\times m; \R)\) be an M-matrix. We set \(C:=M^{-1}\). 
If \(1 \leq k, \ell \leq m\) are distinct integers, then
\begin{equation}\label{eq:firstequation}
\begin{split}
\sum_{j=1}^m \abs{m_{kj}}(c_{kk}c_{j\ell}-c_{jk}c_{k\ell})=c_{k\ell},
\end{split}
\end{equation}
and for all  integers \(1 \leq i \leq m\) with \(i\neq k,\ell\), 
\begin{equation}\label{eq:secondequation}
\sum_{j=1}^m \abs{m_{ij}}(c_{ik}c_{j\ell}-c_{jk}c_{i\ell})=0. 
\end{equation}
\end{lemma}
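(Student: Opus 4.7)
The plan is to prove both identities by a direct expansion that exploits the defining relation $MC=I$ together with the sign structure of an M-matrix. Recall that every M-matrix has strictly positive diagonal entries (this follows from $CM=I$ combined with $c_{ij}\geq 0$ and $m_{ij}\leq 0$ for $i\neq j$), so $|m_{ii}|=m_{ii}$ and $|m_{ij}|=-m_{ij}$ for $i\neq j$. The relation $MC=I$ yields
\[\sum_{j=1}^{m} m_{ij}c_{j\ell} = \delta_{i\ell}\]
for all $1\leq i,\ell\leq m$.

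The key observation is that the ``$2\times 2$ minor-type'' expression appearing in each sum vanishes on the natural diagonal. For equation \eqref{eq:firstequation}, the $j=k$ term equals $|m_{kk}|(c_{kk}c_{k\ell}-c_{kk}c_{k\ell})=0$; for equation \eqref{eq:secondequation}, the $j=i$ term equals $|m_{ii}|(c_{ik}c_{i\ell}-c_{ik}c_{i\ell})=0$. Consequently, in each sum we may restrict to $j\neq k$ (respectively $j\neq i$) and replace $|m_{kj}|$ by $-m_{kj}$ (respectively $|m_{ij}|$ by $-m_{ij}$).

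After this rewriting, the first identity reduces, upon splitting off the coefficients $c_{kk}$ and $c_{k\ell}$, to evaluating the two sums $\sum_{j\neq k}(-m_{kj})c_{j\ell}$ and $\sum_{j\neq k}(-m_{kj})c_{jk}$. The first equals $m_{kk}c_{k\ell}$ (using $(MC)_{k\ell}=0$ since $k\neq\ell$), while the second equals $m_{kk}c_{kk}-1$ (using $(MC)_{kk}=1$). Plugging back in,
\[c_{kk}\cdot m_{kk}c_{k\ell} - c_{k\ell}\bigl(m_{kk}c_{kk}-1\bigr) = c_{k\ell},\]
which is precisely \eqref{eq:firstequation}. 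For the second identity, the same manipulation together with $(MC)_{i\ell}=(MC)_{ik}=0$ (here $i\neq k,\ell$) gives $\sum_{j\neq i}(-m_{ij})c_{j\ell}=m_{ii}c_{i\ell}$ and $\sum_{j\neq i}(-m_{ij})c_{jk}=m_{ii}c_{ik}$, so that the whole expression collapses to $c_{ik}m_{ii}c_{i\ell}-c_{i\ell}m_{ii}c_{ik}=0$.

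There is no real obstacle here: once one notices the vanishing of the diagonal term and isolates the sign, everything reduces to two applications of $MC=I$. The only care needed is to keep straight which of $k$, $\ell$, $i$ one is using to kill the diagonal entry and which to apply $(MC)_{\cdot,\cdot}=\delta_{\cdot,\cdot}$.
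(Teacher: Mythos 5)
Your proof is correct and follows essentially the same route as the paper: both arguments rest on the identity $\sum_j m_{ij}c_{j\ell}=\delta_{i\ell}$ from $MC=I$, the vanishing of the diagonal term in the $2\times 2$-minor expression, and the sign condition $m_{ij}\leq 0$ for $i\neq j$ to convert $|m_{ij}|$ into $-m_{ij}$. The paper merely packages the computation more compactly as $\sum_j m_{ij}(c_{ik}c_{j\ell}-c_{jk}c_{i\ell})=\delta_{i\ell}c_{ik}-\delta_{ik}c_{i\ell}$, whereas you expand the two sums separately; the content is identical.
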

\begin{proof}
Since \(C\) is the matrix inverse of \(M\), we compute
\begin{equation*}
\begin{split}
&\sum_{j=1}^m m_{ij}c_{ik}c_{j \ell}=\delta_{i \ell}c_{ik}, \\
&\sum_{j=1}^m m_{ij}c_{jk}c_{i\ell}=\delta_{ik}c_{i\ell}
\end{split}
\end{equation*}
for all \(1 \leq i \leq m\).
As a result, we obtain
\begin{equation*}
\sum_{j=1}^m m_{ij}(c_{ik}c_{j \ell}-c_{jk}c_{i\ell})=\delta_{i \ell}c_{ik}-\delta_{ik}c_{i\ell}.
\end{equation*}
Therefore, the desired equalities follow, since \(m_{ij} \leq 0\) for all distinct integers \(1 \leq i, j \leq m\). 
\end{proof}

We proceed with the following corollary. 
\begin{corollary}[zero pattern of inverse M-matrices]\label{Cor:M-mat}
Let \(m\geq 2\) and let \(M\in \Mat(m\times m;\R)\) be an invertible matrix with non-positive off-diagonal elements. We set \(C:=M^{-1}\).
If \(M\) is an M-matrix and \(k, \ell\in \{1, \ldots, m\}\) are two distinct integers such that \(c_{k \ell}=0\), then
\begin{enumerate}[label=(\roman*)]
\item\label{it:1} for all integers \(i\in \{1, \ldots, m\}\), \(m_{ki}=0\) or \(c_{i\ell}=0\). In particular, \(m_{k \ell}=0\).
\item\label{it:2} for all integers \(i\in \{1, \ldots, m\}\), \(m_{ki}=0\) or \(m_{i \ell}=0\).
\item\label{it:3} the matrix \(M\) has at least \(m-1\) zero entries. 
\end{enumerate}
\end{corollary}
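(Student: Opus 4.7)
The plan is to derive the three parts in order, using Lemma~\ref{lem:zeroRowSum} as the workhorse. The only auxiliary fact I need is that the diagonal entries of $C=M^{-1}$ are strictly positive. This is a standard property of inverse M-matrices: every principal submatrix $M_{\hat{i}\hat{i}}$ of an M-matrix is itself an M-matrix and hence has positive determinant, so $c_{ii}=\det(M_{\hat{i}\hat{i}})/\det(M)>0$.

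For (i), I would substitute $c_{k\ell}=0$ into the identity \eqref{eq:firstequation} of Lemma~\ref{lem:zeroRowSum}, which collapses to
\[c_{kk}\sum_{j=1}^{m}|m_{kj}|\,c_{j\ell}=0.\]
Since $c_{kk}>0$ and each summand is non-negative (here I use that $M$ is an M-matrix, so $C\geq 0$ entrywise), every term must vanish, yielding the dichotomy $m_{ki}=0$ or $c_{i\ell}=0$ for every $i$. Taking $i=\ell$ and invoking $c_{\ell\ell}>0$ gives the ``in particular'' statement $m_{k\ell}=0$.

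For (ii), I would bootstrap (i) onto itself. Fix an index $i$; the edge cases $i=k$ and $i=\ell$ both reduce to the identity $m_{k\ell}=0$ from (i). If $i\notin\{k,\ell\}$ and $m_{ki}\neq 0$, then (i) forces $c_{i\ell}=0$. Since $i\neq\ell$, I may reapply (i) with the pair $(k,\ell)$ replaced by $(i,\ell)$, and the ``in particular'' clause of that application delivers $m_{i\ell}=0$.

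For (iii), a counting argument closes the proof. Set $A=\{i:m_{ki}=0\}$ and $B=\{i:m_{i\ell}=0\}$. By (ii), $A\cup B=\{1,\dots,m\}$, so $|A|+|B|\geq m$. The zero entries contributed by $A$ (lying in row $k$) and those contributed by $B$ (lying in column $\ell$) share only the single position $(k,\ell)$, which is a genuine overlap because $m_{k\ell}=0$. Thus $M$ has at least $|A|+|B|-1\geq m-1$ zero entries. None of the steps is technically demanding; the only subtlety is noticing that the ``in particular'' conclusion of (i) is precisely what powers the recursive application of (i) needed for (ii).
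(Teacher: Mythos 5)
Your proof is correct and follows essentially the same route as the paper: item (i) is obtained by substituting \(c_{k\ell}=0\) into identity \eqref{eq:firstequation} of Lemma \ref{lem:zeroRowSum}, using the entrywise non-negativity of \(C\) and the strict positivity of its diagonal (the paper cites that every principal submatrix of \(C\) is an inverse M-matrix, which is equivalent to your determinant argument), and items (ii) and (iii) are then read off exactly as you describe. The paper merely declares (ii) and (iii) to be direct consequences, so your bootstrap of (i) and the counting of the single overlap at position \((k,\ell)\) supply details the paper leaves implicit.
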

\begin{proof}
Clearly, item \ref{it:2} is a direct consequence of item \ref{it:1} and item \ref{it:3} is a direct consequence of item \ref{it:2}. 
To conclude the proof we establish item \ref{it:1}. Lemma \ref{lem:zeroRowSum} tells us that
\begin{equation*}
\sum_{i=1}^m \abs{m_{ki}}(c_{kk}c_{i\ell}-c_{ik}c_{k\ell})=0.
\end{equation*}
Thus, we obtain
\begin{equation}\label{eq:zero1}
\abs{m_{ki}}c_{kk}c_{i \ell}=0
\end{equation}
for all integers \(1 \leq i \leq m\). 
 Since each principal submatrix of \(C\) is the inverse matrix of an M-matrix, cf. \cite[Corollary 3]{johnson1982inverse}, it follows \(c_{kk} \neq 0\). Thus, via Equation \eqref{eq:zero1} we obtain
\(m_{ki}=0\) or \(c_{i\ell}=0\) for all \(i\in\{1, \ldots, m\}\), as desired. 
\end{proof}

Theorem \ref{thm:estiMate} will be established via a density argument.
As it turns out, it will be beneficial to approximate \(C\) by matrices with non-vanishing minors. 
To this end, we need the following genericity condition. 
\begin{definition}[generic matrix]
Let \(m\geq 1\) be an integer and let \(A\in \Mat(m\times m; \R)\) be a matrix. Suppose that \(1 \leq k \leq m\) is an integer
and let \(I,J\subset \{1, \ldots, m\}\) be two subsets such that \(\card{(I)}=\card{(J)}=k\). 

We use the notation \(A[I,J]\in  \Mat(k\times k; \R)\) to denote the matrix that is obtained from \(A\) by keeping the rows of \(A\) that belong to \(I\) and 
the columns of \(A\) that belong to \(J\). 
We say that \(A\) is generic if
\begin{equation*}
\det(A[I,J])\neq 0
\end{equation*}
for all non-empty subsets \(I,J\subset \{1, \ldots, m\}\) with \(\card{(I)}=\card{(J)}\).
\end{definition}
The subsequent lemma demonstrates that being generic is a 'generic property' as used in the context of algebraic geometry. 
\begin{lemma}\label{lem:martin}
Let \(m\geq 1\) be an integer and let \(A\in \Mat(m\times m; \R)\) be a matrix.
The following holds
\begin{enumerate}[label=(\roman*)]
\item if \(A\) is generic, then \(A^{-1}\) is generic as well.
\item the set of generic matrices is open and dense in \( \Mat(m\times m; \R)\). 
\end{enumerate}
\end{lemma}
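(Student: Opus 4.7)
For part (i), the plan is to appeal to \emph{Jacobi's identity for minors of the inverse}, which is the same identity invoked in the preceding example via \eqref{eq:Jacobi}. Recall that for an invertible matrix $A\in \Mat(m\times m;\R)$ and subsets $I, J\subset \{1,\ldots,m\}$ with $\card(I)=\card(J)=k$ where $0<k<m$, one has
\begin{equation*}
\det\!\bigl(A^{-1}[I,J]\bigr) \;=\; \pm\,\frac{\det\!\bigl(A[J^c,I^c]\bigr)}{\det(A)}.
\end{equation*}
Assuming $A$ is generic, the full determinant $\det(A)=\det(A[\{1,\ldots,m\},\{1,\ldots,m\}])$ is nonzero, and every complementary minor $\det(A[J^c,I^c])$ is nonzero as well. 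The boundary cases $k=m$ and $k=0$ are handled by the obvious identity $\det(A^{-1})=1/\det(A)\neq 0$. Thus every minor of $A^{-1}$ is nonzero, proving (i).

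For part (ii), I would use a basic algebraic-geometry style argument. For each pair of subsets $I,J\subset \{1,\ldots,m\}$ with $\card(I)=\card(J)$, the assignment $A\mapsto \det(A[I,J])$ is a polynomial in the $m^2$ entries of $A$, hence continuous on $\Mat(m\times m;\R)$. The complement of the generic set is then
\begin{equation*}
\bigcup_{\substack{I,J\subset\{1,\ldots,m\} \\ \card(I)=\card(J)}} \bigl\{A\in \Mat(m\times m;\R) : \det(A[I,J])=0\bigr\},
\end{equation*}
a \emph{finite} union of closed sets, which is therefore closed. This yields openness of the generic set.

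For density, the key observation is that each of the finitely many polynomials $A\mapsto \det(A[I,J])$ is not identically zero: given $I=\{i_1<\cdots<i_k\}$ and $J=\{j_1<\cdots<j_k\}$, one may exhibit a matrix $A_0$ with $(A_0)_{i_p,j_p}=1$ and all other entries $0$, so that $\det(A_0[I,J])=1$. Consequently, the zero set of each such polynomial is a proper algebraic subvariety of $\Mat(m\times m;\R)\cong \R^{m^2}$, and in particular has Lebesgue measure zero. The union above, being finite, also has measure zero, and its complement is therefore dense. This yields (ii). I expect no real obstacle here; the only care needed is to record Jacobi's identity correctly (with complementary index sets) in part (i) and to handle the edge case $k=m$ separately.
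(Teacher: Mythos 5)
Your proposal is correct and follows essentially the same route as the paper: part (i) via Jacobi's identity for complementary minors of the inverse, and part (ii) by observing that the non-generic matrices form the zero locus of (a product of, or finite union of zero sets of) non-zero polynomials, whose complement is open and dense. The only cosmetic difference is that the paper multiplies all the minors into a single polynomial while you treat them as a finite union of closed measure-zero sets, and you additionally exhibit a witness matrix showing each minor polynomial is not identically zero, a point the paper leaves as "clear."
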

\begin{proof}
The first item is a direct consequence of  Jacobi's equality, cf. \cite{Jacobi1841},
\begin{equation}\label{eq:Jacobi}
\abs{\det (A^{-1}[I,J])\det(A)}=\abs{\det\left(A\big[[m]\setminus J, [m]\setminus I\big]\right)},
\end{equation}
where \(I,J\subset [m]:=\{1, \ldots, m\}\) with \(\card{(I)}=\card{(J)}\) and \(A[\varnothing, \varnothing]\) is by definition equal to the identity matrix. 
Next, we establish the second item. A matrix \(A\in \Mat(m\times m; \R)\) is generic if and only if
\begin{equation*}
p(A):=\prod_{I,J\subset [m], \abs{I}=\abs{J}} \det (A[I,J]) \neq 0. 
\end{equation*}
Clearly, \(p\) is a non-zero polynomial in the entries of \(A\). It is straightforward to show that the complement of the zero set of a non-zero polynomial \(q\colon \R^N\to \R\) is
an open and dense subset of \(\R^N\), for all \(N\geq 1\). Therefore, the set of generic matrices is an open and dense subset of \( \Mat(m\times m; \R)\), as was to be shown. 
\end{proof}
We proceed with the following lemma, which is the key component in the proof of Theorem \ref{thm:estiMate}. 
\begin{lemma}\label{lem:signPattern}
Let \(m\geq 2\) and let \(A\in \Mat(m\times m; \R)\) be a non-negative matrix. 
If \(A\) is a generic matrix, then for all distinct integers \(1 \leq k,\ell \leq m\) the skew-symmetric matrix \(A^{_{^{^{(k,\ell)}}}}\in \Mat(m\times m; \R)\) 
given by 
\begin{equation*}
a_{ij}^{_{^{^{(k,\ell)}}}}:=a_{ik}a_{j\ell}-a_{jk}a_{i\ell},
\end{equation*}
has the property that each two rows of \(A^{_{^{^{(k,\ell)}}}}\) have a distinct number of positive entries.
\end{lemma}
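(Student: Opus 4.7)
The plan is to factor the entries of $A^{(k,\ell)}$ in a way that exposes their signs and then use the genericity hypothesis to compare rows via a linear order. Concretely, for each row $i$ define the ratio
\begin{equation*}
r_i := \frac{a_{ik}}{a_{i\ell}},
\end{equation*}
which is well-defined and strictly positive: non-negativity of $A$ plus genericity (all $1\times 1$ minors are non-zero) forces every entry, in particular $a_{i\ell}$ and $a_{ik}$, to be strictly positive. A direct calculation gives the key factorisation
\begin{equation*}
a_{ij}^{_{^{^{(k,\ell)}}}}=a_{ik}a_{j\ell}-a_{jk}a_{i\ell}=a_{i\ell}\,a_{j\ell}\,(r_i-r_j),
\end{equation*}
so the sign of $a_{ij}^{_{^{^{(k,\ell)}}}}$ coincides with the sign of $r_i-r_j$.

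Next I would invoke genericity on $2\times 2$ minors: if $r_i=r_j$ for some $i\neq j$, then $a_{ik}a_{j\ell}-a_{jk}a_{i\ell}=0$, which is precisely $\det(A[\{i,j\},\{k,\ell\}])=0$, contradicting genericity. Hence the numbers $r_1,\ldots,r_m$ are pairwise distinct and can be totally ordered by a permutation $\sigma$ of $\{1,\ldots,m\}$ with $r_{\sigma(1)}<r_{\sigma(2)}<\cdots <r_{\sigma(m)}$.

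For a fixed index $i=\sigma(t)$, the factorisation above shows that $a_{ij}^{_{^{^{(k,\ell)}}}}>0$ precisely for those indices $j$ with $r_j<r_i$, which number exactly $t-1$, while $a_{ii}^{_{^{^{(k,\ell)}}}}=0$ and all remaining entries in the row are strictly negative. Consequently row $\sigma(t)$ of $A^{_{^{^{(k,\ell)}}}}$ has exactly $t-1$ positive entries. As $t$ ranges over $\{1,\ldots,m\}$, the row counts range over $\{0,1,\ldots,m-1\}$ bijectively, so any two rows of $A^{_{^{^{(k,\ell)}}}}$ carry distinct numbers of positive entries, which is the claim. The only substantive input is the elementary factorisation identity together with the $2\times 2$-minor part of genericity; no real obstacle arises, and higher-order minors of $A$ play no role in this particular lemma.
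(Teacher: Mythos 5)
Your proof is correct, and it takes a genuinely different -- and substantially simpler -- route than the paper's. The paper argues by induction on \(m\): it checks \(m=2\) and \(m=3\) by hand (the case \(m=3\) requires ruling out the two cyclic sign patterns of a skew-symmetric \(3\times 3\) matrix by a computation with the ratios \(a_{ik}/a_{i\ell}\), which is in effect a local instance of your factorisation), and then runs a delicate case analysis on the row with the most positive entries to pass from \(m-1\) to \(m\). Your identity \(a_{ij}^{(k,\ell)}=a_{i\ell}a_{j\ell}(r_i-r_j)\) with \(r_i=a_{ik}/a_{i\ell}\) linearises the problem at once: since all entries of \(A\) are strictly positive, the sign pattern of \(A^{(k,\ell)}\) is exactly the comparison pattern of the pairwise distinct reals \(r_1,\dots,r_m\), so sorting them shows the rows carry \(0,1,\dots,m-1\) positive entries respectively. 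This buys two things: it removes the induction entirely, and it makes visible that only the non-vanishing of the entries (\(1\times 1\) minors) and of the \(2\times 2\) minors \(\det(A[\{i,j\},\{k,\ell\}])\) is needed, i.e.\ the lemma holds under a hypothesis strictly weaker than full genericity. (Full genericity is still used elsewhere, via Lemma \ref{lem:martin}, to produce the approximating sequence in the proof of Theorem \ref{thm:estiMate}, so nothing downstream is affected.)
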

\begin{proof}
We fix two distinct integers \(1 \leq k,\ell \leq m\). If \(m=2\), then each two rows of  \(A^{_{^{^{(k,\ell)}}}}\) have a distinct number of positive entries, since \(A\) is generic.
Now, suppose that \(m=3\). The matrix  \(A^{_{^{^{(k,\ell)}}}}\) is skew-symmetric; hence, as \(A\) is generic we obtain that \(A^{_{^{^{(k,\ell)}}}}\) can have \(2^{3}\) different sign patterns. If
\begin{equation}\label{eq:cannot}
a_{12}^{_{^{^{_{(k,\ell)}}}}}, a_{23}^{_{^{^{(k,\ell)}}}}, a_{31}^{_{^{^{(k,\ell)}}}} > 0 \quad\textrm{ or }\quad a_{12}^{_{^{^{(k,\ell)}}}}, a_{23}^{_{^{^{(k,\ell)}}}}, a_{31}^{_{^{^{(k,\ell)}}}} <0,
\end{equation}
then each row of  \(A^{_{^{^{(k,\ell)}}}}\) has the same number of positive entries and the statement does not hold.  
For the other 6 sign patterns it is straightforward to check that each row of  \(A^{_{^{^{(k,\ell)}}}}\) has a different number of positive entries.

In the following, we show that \eqref{eq:cannot} cannot occur. For the sake of a contradiction, we suppose \(a_{12}^{_{^{^{(k,\ell)}}}}, a_{23}^{_{^{^{(k,\ell)}}}}, a_{31}^{_{^{^{(k,\ell)}}}} > 0 \). 
Since  \(a_{12}^{_{^{^{(k,\ell)}}}}>0\), we obtain 
\begin{equation}\label{eq:firstConsequence}
a_{1k} > \frac{a_{2k}a_{1\ell}}{a_{2\ell}}.
\end{equation}
Since \(a_{31}^{_{^{^{(k,\ell)}}}} > 0\), we estimate via \eqref{eq:firstConsequence}
\begin{equation}\label{eq:estimateofMinors}
a_{3k}a_{1\ell} > a_{1k}a_{3\ell} > \frac{a_{2k}a_{1\ell}}{a_{2\ell}}a_{3\ell}.
\end{equation}
Thus, \eqref{eq:estimateofMinors} tells us that
\begin{equation*}
a_{3k}a_{2\ell} > a_{2k}a_{3\ell};
\end{equation*}
which contradicts \( a_{23}^{_{^{^{(k,\ell)}}}}>0\). Hence, the case  \(a_{12}^{_{^{^{(k,\ell)}}}}, a_{23}^{_{^{^{(k,\ell)}}}}, a_{31}^{_{^{^{(k,\ell)}}}} > 0 \) cannot occur. The other invalid sign pattern can be treated analogously . Therefore, \eqref{eq:cannot} cannot occur, as claimed. By putting everything together, we conclude that the statement is valid if \(m=3\). 

We proceed by induction. Let \(m\geq 4\) be an integer and suppose that the statement is valid for all \(2 \leq m^\prime < m\). Before we proceed with the proof we introduce some notation. For every matrix \(B\in \Mat(m\times m;\R)\) we denote by \(B_{ij}\in \Mat((m-1)\times (m-1); \R)\) the matrix that is obtained from \(B\) by deleting the \(i\)-th row and the \(j\)-th column of \(B\). Moreover, for all integers \(1 \leq i,j \leq m\) with \(i \neq j\) we set
\begin{equation*}
\begin{split}
&n_i^{+}(B):=\textrm{number of positive entries of the i-th row of } B, \\
&n_{i,j}^{+}(B):=\textrm{number of positive entries of } (b_{i1}, \ldots,  \widehat{b_{ij}}, \ldots, b_{im}). 
\end{split}
\end{equation*}
We use \(\widehat{b_{ij}}\) to indicate that the entry \(b_{ij}\) is omitted. Since the non-negative \((m-1)\times (m-1)\)-matrix \(A_{ij}\) is generic for all \(1 \leq i, j \leq m\), we obtain via the induction hypothesis that each row of \(\left(A^{_{^{^{(k,\ell)}}}}\right)_{ii}\) has a different number of positive entries for all \(1 \leq i \leq m\). For simplicity of notation, we abbreviate \(B:=A^{_{^{^{(k,\ell)}}}}\) for the rest of this proof. We have to show that  each two rows of  \(B\) have a distinct number of positive entries. Let \(p\in \{1, \ldots, m\}\setminus{\{m\}}\) denote the unique integer such  that \(n_{p,m}^{+}(B)=(m-1)-1\), that is, the \(p\)-th row of \(B_{mm}\) has the most positive entries. Suppose that \(b_{pm}>0\). This implies \(n_{p}^{+}(B)=m-1\). Consequently, the \(p\)-th column of \(B\) has no positive entries; hence,
as each two rows of \(B_{pp}\) have a distinct number of positive entries and the number of positive entries of each row of \(B_{pp}\) is strictly smaller than \(m-1\),
we obtain that all rows of \(B\) have a distinct number of positive entries.
Hence, the statement follows if \(b_{pm}>0\). 

Now, we suppose that \( b_{pm} < 0\). This implies \(n_{p}^{+}(B)=m-2\). There is precisely one integer \( q\in \{1, \ldots, m\}\setminus \{p\}\) such that \(n_{q,p}^{+}(B)=(m-1)-1\). Suppose that \(q=m\). Since \(b_{mp} > 0\), we obtain that \(n_{m}^{+}\left(B\right)=m-1\). Thus, we obtain as before via the induction hypothesis that all rows of \(B\) have a distinct number of positive entries. Therefore, the statement follows if  \(q=m\). We are left with the case \( b_{pm} < 0\) and \(q\neq m\). Note that in this case
\begin{equation}\label{eq:contradiction}
n_{p}^{+}\left(B\right)=n_{q}^{+}\left(B\right)=m-2 \textrm{ and } b_{qp}<0.
\end{equation}
As a result, for each integer \(r\in \{1, \ldots, m\}\setminus{\{p,q,m\}}\) both entries \(b_{pr}\) and \(b_{qr}\) are positive.
But via \eqref{eq:contradiction} this implies
\begin{equation*}
n_{p,r}^{+}\left(B\right)=n_{q,r}^{+}\left(B\right)=m-3,
\end{equation*}
for all \(r\in \{1, \ldots, m\}\setminus{\{p,q,m\}}\) which is not possible due to the induction hypothesis.
Therefore, the case \( b_{pm} < 0\) and \(q\neq m\) cannot occur. 

We have considered all cases and thus the statement follows by induction. The lemma follows. 
\end{proof}
We conclude this section with the proof of Theorem \ref{thm:estiMate}. 
\begin{proof}[proof of Theorem \ref{thm:estiMate}]
Fix \(k, \ell\in \{1, \ldots, m\}\) with \(k\neq \ell\). 
Lemma \ref{lem:martin} and a diagonal sequence argument tell us that there is a sequence \(\{C_r\}_{r\geq 1}\), where \(C_r:=(c_{ij}^{(r)})_{_{1 \leq i, j \leq m}}\),  of non-negative generic matrices such that \(C_r\to C\) with \(r\to +\infty\). 
By passing to a subsequence (if necessary) we may assume that the matrices \(C^{_{^{^{(k,\ell)}}}}_r\), defined in Lemma \ref{lem:signPattern},  all have the same sign pattern. For each integer \(r\geq 1\) let \(T_r\in \Mat(m\times m; \R)\) be the matrix given by
\begin{equation*}
t_{ij}^{(r)}:=|m_{ij}^{(r)}|\left( c_{ik}^{(r)}c_{j\ell}^{(r)}-c_{jk}^{(r)}c_{i\ell}^{(r)}\right),
\end{equation*}
where \(M_r:=C_r^{-1}\). Due to the first item in Lemma \ref{lem:martin}, it follows that \(m_{ij}^{(r)}\neq 0\). Thus,  the matrices \(T_r\) and \(C^{_{^{^{(k,\ell)}}}}_r\) have the same sign pattern.

Therefore, by the virtue of Lemma \ref{lem:signPattern}, each row of \(T_r\) has a distinct number of positive entries. Fix an integer \(r \geq 1\). For each integer \(1 \leq p \leq m\) let \(c(p)\) be the unique integer such that
the \(c(p)\)-th row of \(T_r\) has exactly \(m-p\) positive entries. Since all matrices \(T_r\) have the same sign pattern, the definition of \(c\) is
independent of the integer \(r\geq 1\). The map \(c\colon \{1, \ldots, m\}\to \{1, \ldots, m\}\) is a bijection and 
\begin{equation*}\label{eq:TriangularPattern}
\begin{cases}
&t_{c(p) j}^{(r)} < 0 \,\,\, \textrm{ if } \,\,\, j\in \{ c(1), \ldots, c(p-1)\} \,\,\, \\[+0.35em] 
&t_{c(p) j}^{(r)} > 0 \,\,\, \textrm{ if } \,\,\, j\in \{ c(p+1), \ldots, c(m)\}
\end{cases}
\end{equation*}
for all integers \(r\geq 1\). 
Let \(T\in \Mat(m\times m; \R)\) be the matrix given by
\begin{equation*}
t_{ij}:=\abs{m_{ij}}\left( c_{ik}c_{j\ell}-c_{jk}c_{i\ell}\right).
\end{equation*}
Clearly, \(T_r\to T\) with \(r\to +\infty\). As a result, 
\begin{equation}\label{eq:TriangularPatternII}
\begin{cases}
&t_{c(p) j} \leq 0 \,\,\, \textrm{ if } \,\,\, j\in \{ c(1), \ldots, c(p-1)\} \,\,\, \\[+0.35em] 
&t_{c(p) j} \geq 0 \,\,\, \textrm{ if } \,\,\, j\in \{ c(p+1), \ldots, c(m)\}.
\end{cases}
\end{equation}
By Lemma \ref{lem:zeroRowSum} and \eqref{eq:TriangularPatternII} we obtain that
\begin{equation}\label{eq:VeryUsefulIdentity}
\sum_{ j=1}^{p-1} t_{c(j)c(p)}=\sum_{j=p+1}^m t_{c(p)c(j)}
\end{equation}
for all integers \(1 \leq p \leq m\) with \(c(p)\neq k,\ell\), since \(T\) is skew-symmetric. 

In \cite[Theorem 3.1]{markham1972nonnegative}, Markham established that every almost principal minor of \(C\) is non-negative. 
Hence, 
\begin{equation*}
\abs{m_{kj}}\left( c_{kk}c_{j\ell}-c_{jk}c_{k\ell}\right) \geq 0 \,\,\,\textrm{ and }\,\,\, \abs{m_{\ell j}}\left( c_{\ell k}c_{j\ell}-c_{jk}c_{\ell \ell}\right) \leq 0
\end{equation*}
for all integers \(1 \leq j \leq m\). Consequently, we obtain that \(c(1)=k\) and \(c(m)=\ell\). 
For each integer \(2 \leq h \leq m-1\) we compute via \eqref{eq:VeryUsefulIdentity}, 
\begin{equation}\label{eq:induction}
\begin{split}
&\sum_{p=2}^{h} \sum_{j=p+1}^m t_{c(p)c(j)}=\sum_{p=2}^{h} \sum_{ j=1}^{p-1} t_{c(j)c(p)} \\
&=\sum_{j=2}^{h} t_{c(1)c(j)}+\sum_{j=2}^{h-1}\sum_{p=j+1}^{h} t_{c(j)c(p)} \\
&\leq \sum_{j=2}^{h} t_{c(1)c(j)}+\sum_{p=2}^{h-1}\sum_{j=p+1}^{m} t_{c(p)c(j)} .
\end{split}
\end{equation}
Note that
\begin{equation*}\label{eq:theProdigy}
\frac{1}{2}\sum_{i=1}^m \sum_{j=1}^m \abs{m_{ij}\left(c_{ik}c_{jl}-c_{jk}c_{il}\right)}=\sum_{p=1}^m \sum_{j=p+1}^m t_{c(p)c(j)} . 
\end{equation*}
Therefore, by the use of \eqref{eq:induction} we obtain
\begin{equation*}
\begin{split}
&\frac{1}{2}\sum_{i=1}^m \sum_{j=1}^m \abs{m_{ij}\left(c_{ik}c_{jl}-c_{jk}c_{il}\right)}\\
&\leq \sum_{h=2}^{m} \sum_{j=2}^h t_{c(1)c(j)}\leq (m-1)\sum_{j=1}^{m} t_{c(1)c(j)}.
\end{split}
\end{equation*}
Lemma \ref{lem:zeroRowSum} tells us that 
\begin{equation*}
\sum_{j=1}^{m} t_{c(1)c(j)}=c_{k\ell};
\end{equation*}
therefore, the theorem follows. 
\end{proof}


\section{Proof of Theorem \ref{hopefullySolved}}\label{sec:mainProof}
\begin{proof}[Proof of Theorem \ref{hopefullySolved}]
Without loss of generality we may assume (by scaling) that \(\Lip(f)=1\). We set \(I:=X\), \(T:=X\setminus S\) and let the map \(\mathbf{x}\colon I\to H\) be given by the identity.

Let \(G\colon [0,+\infty)\to [0,+\infty)\) denote the function such that \(x=F(\sqrt{G(x)})\) for all real numbers \(x\in [0,+\infty)\). Observe that the function \(G\) is convex, strictly-increasing and \(G(0)=0\). We say that \(\bm{\xi}\colon I \times I \to \R\) \textit{lies above} \(f\)
if there is a map \(\overline{f}\colon X \to \Conv(\mathsf{Im}(f))\) such that \(\overline{f}(s)=f(s)\) for all \(s\in S\) and 
\begin{equation*}
G\left( \norm{\overline{f}\big(\mathbf{x}(i)\big)-\overline{f}\big(\mathbf{x}(j)\big)}_{_H}\right) \leq \bm{\xi}(i,j) \quad \textrm{ for all } i,j \in I.
\end{equation*}
We use \(\Conv\) to denote the closed convex hull. Let \(E_f\subset  \R^{I\times I}\) be the set of all \(\bm{\xi}\in \R^{I\times I}\) that lie above \(f\). Moreover, let
\(\bm{v}\colon I\times I\to \R\) be the map given by
\begin{equation}\label{eq:vTee}
\bm{v}(i,j):= \norm{ \mathbf{x}(i)-\mathbf{x}(j)}_{_H}^2. 
\end{equation}

Suppose that \(L\in[1, +\infty)\) is a real number.  If  \(L\bm{v}\in E_f\), then  the map \(f\) admits a Lipschitz extension \(\overline{f}\colon X\to E\) such that \[\Lip(\overline{f}) \leq \sup_{x>0} \frac{F(\sqrt{L} x)}{F(x)}.\] Indeed,
if \(L\bm{v}\in E_f\), then (by definition) there exists a function \(\overline{f}\colon X \to \Conv(\mathsf{Im}(f))\) such that
\[G\left( \norm{\overline{f}\big(\mathbf{x}(i)\big)-\overline{f}\big(\mathbf{x}(j)\big)}_{_H} \right) \leq L \bm{v}(i,j)\quad \textrm{ for all } i,j \in I;\]
consequently, by applying the function \(F\left(\sqrt{\cdot}\right)\) on both sides, we obtain 
\begin{equation*}
\begin{split}
\norm{\overline{f}\big(\mathbf{x}(i)\big)-\overline{f}\big(\mathbf{x}(j)\big)}_{_H} &\leq F\left(\sqrt{\left(L\norm{ \mathbf{x}(i)-\mathbf{x}(j)}_{_H}^2\right)}\right) \\
&\leq \sup_{x>0} \frac{F(\sqrt{L} x)}{F(x)}\, F\big(\norm{\mathbf{x}(i)-\mathbf{x}(j)}_{_H}\big)
\end{split}
\end{equation*}
for all \(i,j \in I\). Since \(X\subset F[H]\) the map \(\overline{f}\) is a Lipschitz extension of \(f\) such that \(\Lip(\overline{f})\) has the desired upper bound. 
Thus, to prove the theorem it suffices to show that if \(L\geq (m+1)\), then \(L\bm{v}\in E_f\). 

To this end, we suppose that \(L\bm{v}\notin E_f\) and we show that \(L < (m+1) \). Since the function \(G\) is strictly-increasing and convex, the set \(E_f\) is closed and convex; thus, by the hyperplane separation theorem we obtain a real number \(\epsilon >0 \) and a non-zero vector \(\bm{\lambda}\in \R^{I\times I}\) such that
\begin{equation}\label{Eq:HyperplaneSeparation}
\langle L\bm{v} , \bm{\lambda} \rangle_{\R^{I\times I }} + \epsilon <  \langle \bm{\xi} , \bm{\lambda} \rangle_{\R^{I\times I}}\quad \textrm{ for all } \bm{\xi} \in E_f. 
\end{equation}

We claim that each entry of \(\bm{\lambda}\) is non-negative. Indeed, if \(\bm{\xi}\in E_f\), then the point \((\xi_1, \ldots, \xi_{k-1}, c \xi_k, \xi_{k+1}, \ldots, \xi_N)\), where \(N:=\card(I\times I)\), is contained in \( E_f\) for all integers \(1 \leq k \leq N\) and real numbers \(c\in [1, +\infty)\). Hence, a simple scaling argument implies that the \(k\)-th entry of \(\bm{\lambda}\) is non-negative for each integer \(1 \leq k \leq N\), as claimed. 

In the following, we estimate \( \langle L\bm{v} , \bm{\lambda} \rangle_{\R^{I \times I }}\) from below. We may assume that \(\bm{\lambda}\) is symmetric. By adjusting \(\epsilon  >0 \) if necessary, we may assume that \(\sum_{k \in S} \lambda_{i k} \neq 0\) for all \(i\in T\).
Let the matrix \(M:=M(\bm{\lambda}, T)\) be given as in \eqref{eq:MMatrix}. Since each entry of the vector \(\bm{\lambda}\) is non-negative and \(\sum_{k \in S} \lambda_{i k} \neq 0\) for all \(i\in T\), the matrix \(M(\bm{\lambda}, T)\) is non-singular. We set \(C:=M^{-1}\). 
Proposition \ref{prop:quadratic} tells us that 
\begin{equation}\label{eq:Equality77}
\mathsf{m}:=\mathsf{m}(\mathbf{x}, \bm{\lambda}, \id, T)= \sum_{r\in S} \sum_{s\in S} \bm{\eta}(r,s) \norm{\mathbf{x}(r)-\mathbf{x}(s)}^{2}_{_H},
\end{equation}
where \(\bm{\eta}\colon I\times I\to \R\) is given by
\[\bm{\eta}(r,s):=\lambda_{rs}+\sum_{i\in T}\sum_{j\in T} \lambda_{ir} c_{ij} \lambda_{js}.\]
 Clearly,
\begin{equation}\label{eq:Lowe}
L \mathsf{m} \leq \langle L\bm{v} , \bm{\lambda} \rangle_{\R^{I \times I}}.
\end{equation}
Next, we estimate \(\langle L\bm{v} , \bm{\lambda} \rangle_{\R^{I\times I}}\) from above. We set 
\begin{equation*}
\bar{\bm{\lambda}}_i:=\frac{1}{\norm{\bm{\lambda}_i}_{_1} }\bm{\lambda}_i \in \Delta^{\card(S)-1}
\end{equation*}
for each \(i\in T\), where \(\bm{\lambda}_i:=(\lambda_{ik})_{k\in S} \).
By \eqref{eq:SumToOne},
\begin{equation}\label{eq:almostDone}
\sum_{j\in T} c_{ij} \norm{\bm{\lambda}_i}_{_1}=\sum_{j\in T} c_{ij} \sum_{k\in S} \lambda_{jk}=1
\end{equation}
for all \(i\in T\). 
For each \(i\in T\) we define
\begin{equation*}
w_{i}:=\sum_{j\in T} c_{ij}\left(\sum_{k\in S} \lambda_{jk} \right) y_{\bar{\bm{\lambda}}_j}, \,\,\textrm{ where  }\, y_{\bar{\bm{\lambda}}_j}=\sum_{r\in S} \bar{\lambda}_{jr} f(r).
\end{equation*}
 Using \eqref{eq:almostDone} we obtain \(w_i\in \Conv(\mathsf{Im}(f))\) for all \(i\in T\). 
Equation \eqref{Eq:HyperplaneSeparation} tells us that 
\begin{equation}\label{eq:fromabove}
\langle L\bm{v} , \bm{\lambda} \rangle_{\R^{I\times I}} < A+B+C;
\end{equation}
where,
\begin{equation*}
\begin{split}
&A:=2\sum_{i\in T} \sum_{r\in S} \lambda_{ir}G\left(\norm{f(r)-w_i}_{_E}\right), \\
& B:=\sum_{i\in T}\sum_{j\in T} \lambda_{ij} G\left(\norm{w_i-w_j}_{_E}\right), \\
&C:=\sum_{r\in S} \sum_{s\in S} \lambda_{rs}  G\left(\norm{f(r)-f(s)}_{_E}\right).
\end{split}
\end{equation*}
By convexity of the strictly-increasing function \(G\) and the use of \eqref{eq:almostDone}, we estimate
\begin{equation*}
\begin{split}
&\, A+C \\
&\leq 2\, \sum_{i\in T}\sum_{r\in S}\sum_{j\in T} \lambda_{ir}c_{ij}\norm{\bm{\lambda}_j}_{_1}\,G\left(\norm{f(r)- y_{\bar{\bm{\lambda}}_j}}_{_E}\right)+C \\
& \leq  2\, \sum_{r\in S} \sum_{s\in S} \bm{\eta}(r,s)\, G\left(\norm{f(r)-f(s)}_{_E}\right).
\end{split}
\end{equation*}
Thus, if 
\begin{equation}\label{eq:last}
B=\sum_{i\in T}\sum_{j\in T} \lambda_{ij} G\left(\norm{w_i-w_j}_{_E}\right) \leq (m-1)\, \sum_{r\in S} \sum_{s\in S} \bm{\eta}(r,s) G\left(\norm{f(r)-f(s)}_{_E}\right),
\end{equation}
then we obtain via \eqref{eq:fromabove} and \eqref{eq:Lowe} that
\begin{equation*}
L\mathsf{m} < (m+1) \, \sum_{r\in S} \sum_{s\in S} \bm{\eta}(r,s) G\left(\norm{f(r)-f(s)}_{_E}\right).
\end{equation*}
Since 
\[\norm{f(r)-f(s)}_{_E} \leq F\big( \sqrt{\norm{r-s}_{_H}^2}\big) \quad \textrm{ for all } r,s\in S,\]
it follows
\[G(\norm{f(r)-f(s)}_{_E})\leq \norm{ r-s }_{_H}^2  \quad \textrm{ for all } r,s\in S; \]
as a result, we obtain
\[L\mathsf{m} < (m+1) \mathsf{m}.\]
By virtue of Corollary \ref{Cor:M-mat} every entry of  the matrix \(C\) is positive, hence \(\mathsf{m}>0\) and consequently \(L  < m+1\). Thus, to conclude the proof we are left to establish the estimate \eqref{eq:last}. 
It is readily verified that
\begin{equation*}
w_i-w_j=\frac{1}{2}\sum_{k\in T}\sum_{\ell\in T} \norm{\bm{\lambda}_k}_{_1}\norm{\bm{\lambda}_\ell}_{_1}\left(c_{j\ell}c_{ik}- c_{i\ell}c_{jk}\right)\left(y_{\bar{\bm{\lambda}}_k}-y_{\bar{\bm{\lambda}}_\ell} \right).
\end{equation*}
Since
\begin{equation*}
\frac{1}{2}\sum_{k\in T} \sum_{\ell\in T} \norm{\bm{\lambda}_k}_{_1}\norm{\bm{\lambda}_\ell}_{_1}\abs{c_{j\ell}c_{ik}- c_{i\ell}c_{jk}} \leq \sum_{k\in T} \abs{c_{ik}}\norm{\bm{\lambda}_k}_{_1}\sum_{\ell\in T} |c_{j\ell}| \,\norm{\bm{\lambda}_\ell}_{_1}=1, 
\end{equation*}
we can use the triangle inequality, the convexity of the strictly-increasing map \(G\) and \(G(0)=0\) to estimate
\begin{equation}\label{eq:intermediate}
\begin{split}
&B=\sum_{i\in T}\sum_{j\in T} \lambda_{ij} G\left(\norm{w_i-w_j}_{_E}\right) \\
&\leq \sum_{i\in T}\sum_{j\in T} \lambda_{ij}\, \frac{1}{2} \sum_{k\in T} \sum_{\ell\in T} \norm{\bm{\lambda}_k}_{_1}\norm{\bm{\lambda}_\ell}_{_1}\,\abs{c_{j\ell}c_{ik}- c_{i\ell}c_{jk}} \,\, G\left(\norm{ y_{\bar{\bm{\lambda}}_k}-y_{\bar{\bm{\lambda}}_\ell}}_{_E}\right) \\
&=\sum_{k\in T} \sum_{\ell\in T} \norm{\bm{\lambda}_k}_{_1}\norm{\bm{\lambda}_\ell}_{_1} \left( \frac{1}{2} \sum_{i\in T} \sum_{j\in T} \lambda_{ij}\abs{c_{ik}c_{j\ell}- c_{jk}c_{i\ell}} \right)G\left(\norm{ y_{\bar{\bm{\lambda}}_k}-y_{\bar{\bm{\lambda}}_\ell}}_{_E}\right).
\end{split}
\end{equation}
As pointed out in the beginning of Section \ref{sec:betterName}, \(M(\bm{\lambda}, T)\) is a symmetric M-matrix. Hence, we may invoke Theorem \ref{thm:estiMate} and obtain
\begin{equation*}
\frac{1}{2} \sum_{i\in T} \sum_{j\in T} \lambda_{ij}\abs{c_{ik}c_{j\ell}- c_{jk}c_{i\ell}} \leq (m-1)c_{k\ell}
\end{equation*}
for all distinct \(k, \ell\in T\). Using \eqref{eq:intermediate} we deduce
\begin{equation*}
\begin{split}
&\sum_{i\in T}\sum_{j\in T} \lambda_{ij} G\left(\norm{w_i-w_j}_{_E}\right)  \\
&\leq \left(m-1\right)\sum_{k\in T} \sum_{\ell\in T} \norm{\bm{\lambda}_k}_{_1}\norm{\bm{\lambda}_\ell}_{_1}  c_{k\ell}\,\, G\left(\norm{ y_{\bar{\bm{\lambda}}_k}-y_{\bar{\bm{\lambda}}_\ell}}_{_E}\right).
\end{split}
\end{equation*}
By convexity,
\begin{equation*}
G\left(\norm{ y_{\bar{\bm{\lambda}}_k}-y_{\bar{\bm{\lambda}}_\ell}}_{_E}\right) \leq \sum_{r\in S}\sum_{s\in S} \bar{\lambda}_{kr}\bar{\lambda}_{\ell r}G\big(\norm{f(r)-f(s)}_{_E}\big);
\end{equation*}
thereby, the desired estimate \eqref{eq:last} follows, as was left to show. This completes the proof. 
\end{proof}


\subsection{Acknowledgements} I am indebted to Urs Lang, for his suggestions enabled me to simplify the proofs of the main results considerably. Further, I am thankful to Assaf Naor for pointing out to me the simple bound of \(e^m(X,Y)\) in terms of \(e_n(X,Y)\). I am thankful to the anonymous reviewer for helpful comments that shortened the proof of Lemma \ref{Lem:Lower} and for drawing my attention to additional literature. Finally,
I want to thank Yannick Krifka and Martin Stoller, who read earlier draft versions of this paper.


\bibliographystyle{plain}
\bibliography{refs}

\begin{thebibliography}{10}

\bibitem{ball1992markov}
Keith Ball.
\newblock Markov chains, riesz transforms and lipschitz maps.
\newblock {\em Geometric \& Functional Analysis GAFA}, 2(2):137--172, 1992.

\bibitem{brudnyi2011methods}
Alexander Brudnyi and Yuri Brudnyi.
\newblock {\em Methods of geometric analysis in extension and trace problems},
  volume~1.
\newblock Birkh{\"a}user, 2011.

\bibitem{brudnyimethods}
Alexander Brudnyi and Yuri Brudnyi.
\newblock {\em Methods of geometric analysis in extension and trace problems},
  volume~2.
\newblock Birkh{\"a}user, 2012.

\bibitem{fiedler1962matrices}
Miroslav Fiedler and Vlastimil Pták.
\newblock On matrices with non-positive off-diagonal elements and positive
  principal minors.
\newblock {\em Czechoslovak Mathematical Journal}, 12(3):382--400, 1962.

\bibitem{grunbaum1960projection}
Branko Gr{\"u}nbaum.
\newblock Projection constants.
\newblock {\em Transactions of the American Mathematical Society},
  95(3):451--465, 1960.

\bibitem{Jacobi1841}
C.G.J. Jacobi.
\newblock De determinantibus functionalibus.
\newblock {\em Journal f{\"u}r die reine und angewandte Mathematik},
  22:319--359, 1841.

\bibitem{johnson1982inverse}
Charles~R Johnson.
\newblock Inverse m-matrices.
\newblock {\em Linear Algebra and its Applications}, 47:195--216, 1982.

\bibitem{johnson1986extensions}
William~B Johnson, Joram Lindenstrauss, and Gideon Schechtman.
\newblock Extensions of lipschitz maps into banach spaces.
\newblock {\em Israel Journal of Mathematics}, 54(2):129--138, 1986.

\bibitem{10.2307/4098068}
William~B. Johnson and N.~Lovasoa Randrianarivony.
\newblock {$\ell_p (p > 2)$} does not coarsely embed into a hilbert space.
\newblock {\em Proceedings of the American Mathematical Society},
  134(4):1045--1050, 2006.

\bibitem{kuelbs1973positive}
J.~Kuelbs.
\newblock Positive definite symmetric functions on linear spaces.
\newblock {\em Journal of Mathematical Analysis and Applications},
  42(2):413--426, 1973.

\bibitem{LangSchlichenmaier}
Urs Lang and Thilo Schlichenmaier.
\newblock Nagata dimension, quasisymmetric embeddings, and lipschitz
  extensions.
\newblock {\em International Mathematics Research Notices}, 2005(58):3625,
  2005.

\bibitem{lee2005extending}
James~R Lee and Assaf Naor.
\newblock Extending lipschitz functions via random metric partitions.
\newblock {\em Inventiones mathematicae}, 160(1):59--95, 2005.

\bibitem{LechMaligranda1985}
Lech Maligranda.
\newblock {\em Indices and interpolation}.
\newblock Instytut Matematyczny Polskiej Akademi Nauk, 1985.

\bibitem{markham1972nonnegative}
Thomas~L Markham.
\newblock Nonnegative matrices whose inverses are m-matrices.
\newblock {\em Proceedings of the American Mathematical Society},
  36(2):326--330, 1972.

\bibitem{mendel2008metric}
Manor Mendel and Assaf Naor.
\newblock Metric cotype.
\newblock {\em Annals of Mathematics}, 168(1):247--298, 2008.

\bibitem{mendel2011note}
Manor Mendel and Assaf Naor.
\newblock A note on dichotomies for metric transforms.
\newblock {\em arXiv preprint arXiv:1102.1800}, 2011.

\bibitem{mendel2017relation}
Manor Mendel and Assaf Naor.
\newblock A relation between finitary lipschitz extension moduli.
\newblock {\em arXiv preprint arXiv:1707.07289}, 2017.

\bibitem{naor2017lipschitz}
Assaf Naor and Yuval Rabani.
\newblock On lipschitz extension from finite subsets.
\newblock {\em Israel Journal of Mathematics}, 219(1):115--161, 2017.

\bibitem{ostrowski1937determinanten}
Alexander Ostrowski.
\newblock {\"U}ber die determinanten mit {\"u}berwiegender hauptdiagonale.
\newblock {\em Commentarii Mathematici Helvetici}, 10(1):69--96, 1937.

\bibitem{poole1974survey}
George Poole and Thomas Boullion.
\newblock A survey on m-matrices.
\newblock {\em SIAM review}, 16(4):419--427, 1974.

\bibitem{schoenberg1938}
I.~J. Schoenberg.
\newblock Metric spaces and completely monotone functions.
\newblock {\em Annals of Mathematics}, 39(4):811--841, 1938.

\bibitem{WellsJamesH1975Eaei}
James~H. Wells and L.R. Williams.
\newblock {\em Embeddings and extensions in analysis}, volume Band 84 of {\em
  Ergebnisse der Mathematik und ihrer Grenzgebiete}.
\newblock Springer, New York-Heidelberg, 1975.

\end{thebibliography}
\noindent
\textsc{\small{Mathematik Departement, ETH Zürich, Rämistrasse 101, 8092 Zürich, Schweiz}}\\
\textit{E-mail address:}{\textsf{ giuliano.basso@math.ethz.ch}}\\

\end{document}